\documentclass[12pt,final]{amsart}
\usepackage[utf8]{inputenc}
\usepackage[T1]{fontenc}
\usepackage{lmodern}
\usepackage{amsmath,amssymb,amsthm,mathtools}
\usepackage[margin=1in]{geometry}
\usepackage{mathrsfs,microtype}
\usepackage[shortlabels]{enumitem}
\usepackage[ocgcolorlinks,linkcolor=blue,citecolor=blue,urlcolor=blue]{hyperref}
\hypersetup{pdftitle={Global recovery of a scalar diffusivity from a single final state},pdfauthor={Ching-Lung Lin and Yi-Hsuan Lin}}
\numberwithin{equation}{section}
\allowdisplaybreaks[2]
\mathtoolsset{showonlyrefs}
\newtheorem{theorem}{Theorem}[section]
\newtheorem{lemma}[theorem]{Lemma}
\newtheorem{proposition}[theorem]{Proposition}
\newtheorem{corollary}[theorem]{Corollary}
\theoremstyle{definition}

\newtheorem{remark}[theorem]{Remark}
\newcommand{\R}{\mathbb R}
\newcommand{\C}{\mathbb C}
\newcommand{\Om}{\Omega}

\newcommand{\norm}[1]{\lVert #1\rVert}
\newcommand{\ip}[2]{\langle #1,#2\rangle}
\newcommand{\e}{\mathrm e}
\newcommand{\ii}{\mathsf{i}}
\newcommand{\para}[1]{\vspace{3mm}\noindent\textbf{#1.}}
\DeclareMathOperator{\Div}{div}
\DeclareMathOperator{\osc}{osc}
\DeclareMathOperator{\supp}{supp}

\title[Diffusivity from a single final state]{Global recovery of a scalar diffusivity from a single final state}

\author[C.-L. Lin]{Ching-Lung Lin}
\address{Department of Mathematics, National Cheng Kung University, Tainan 701, Taiwan.}
\email{cllin2@mail.ncku.edu.tw}

\author[Y.-H. Lin]{Yi-Hsuan Lin}
\address{Department of Applied Mathematics, National Yang Ming Chiao Tung University, Hsinchu, Taiwan \& Fakult\"at f\"ur Mathematik, University of Duisburg-Essen, Essen, Germany}
\email{yihsuanlin3@gmail.com, yihsuanlin3@nycu.edu.tw}

\subjclass[2020]{Primary 35R30; Secondary 35K05, 35K20, 35B35}
\keywords{Inverse parabolic problem, scalar diffusivity, final state, global uniqueness, Lipschitz stability, experimental design}
\date{}

\begin{document}
	
	\begin{abstract}
		We prove global uniqueness and Lipschitz stability for a scalar diffusivity in the heat equation from a single final state at any prescribed positive time. On each prescribed uniformly elliptic $C^{1,\alpha}$-bounded class with known coefficient boundary values, we choose a constant initial datum and a spatially constant exponential boundary input that work for the entire class. The result holds on smooth bounded connected domains in every dimension $n\ge2$ and gives an affirmative answer to \cite[Problem~8.2]{Alessandrini} within these classes. The stability estimate controls the $L^2$ diffusivity difference by the $H^2$ final-state difference, and this data norm is optimal within the Sobolev scale for the chosen experiment.
	\end{abstract}
	
	\maketitle
	
	\section{Introduction}\label{sec:introduction}
	
	Let $\Om\subset\R^n$, $n\ge2$, be a bounded connected domain with smooth boundary, and fix $T>0$. We consider the recovery of a positive real scalar diffusivity $a=a(x)$ in
	\begin{equation}\label{eq:general-heat}
		\begin{aligned}
			\begin{cases}
				\partial_tu-\Div(a\nabla u)=0 &\text{in }\Om\times(0,\infty),\\
				u=g &\text{on }\partial\Om\times(0,\infty),\\
				u(\cdot,0)=h &\text{in }\Om.
			\end{cases}
		\end{aligned}
	\end{equation}
	The initial datum $h$ and boundary input $g$ are prescribed, the boundary values of $a$ are known, and the observation is the single final state $u(\cdot,T)$. We write $u(t)=u(\cdot,t)$, and use the same convention for the functions below. The question is whether one can choose $h$ and $g$ independently of the unknown coefficient so that this observation determines $a$. This is the final-data problem in \cite[Section~8, Problem~8.2]{Alessandrini}.

	The main difficulty is that the equation at the observation time contains the unmeasured derivative $\partial_tu(T)$. Thus, the final state alone does not provide the right-hand side of an elliptic coefficient equation. A coefficient difference also appears inside a divergence, while the desired estimate controls only its $L^2$ norm. Critical points of the final state can obstruct a direct transport argument. Moreover, comparing two unknown coefficients changes the evolution operator itself, so the inverse problem remains nonlinear despite the linearity of the forward equation in $u$.

	\subsection{Related results} 
	
	General accounts of inverse parabolic problems, including coefficient and source recovery, are given in \cite[Chapter~3]{Choulli2009} and \cite[Chapter~9]{Isakov2017}. Final overdetermination for parabolic equations has been studied in \cite{AlessandriniVessella,Isakov}. The formulation in \cite[Section~8]{Alessandrini} distinguishes two questions. Problem~8.1 concerns recovery of a scalar diffusivity in two dimensions from a known source, zero initial and boundary data, and observations at one or more positive times. Problem~8.2 instead asks whether an appropriate choice of the initial datum and boundary input permits recovery from one final state when the coefficient boundary values are known. The latter formulation chooses the experiment part of the uniqueness question.

	The elliptic counterpart is the recovery of a diffusion coefficient from a known solution inside the domain. A first-order equation for the coefficient is used in \cite{Richter1981} to obtain continuous dependence under suitable assumptions. In two dimensions, \cite{Alessandrini1986} studies this identification problem with particular attention to critical points of the observed solution. For elliptic Dirichlet equations with a strictly positive source, \cite{BonitoCohenDeVorePetrovaWelper2017} obtains weighted energy estimates and conditional H\"older stability for the $L^2$ coefficient difference from $H^1$ interior data under a priori coefficient bounds. In the parabolic problem, applying an elliptic coefficient estimate at time $T$ also requires control of the unobserved term $\partial_tu(T)$ and its dependence on the diffusivity.
	
	With the principal operator prescribed, \cite[Theorem~4.1(i)--(ii), equation~(4.4)]{Isakov} establishes global uniqueness and Lipschitz stability in Hölder norms for a nonnegative zeroth-order coefficient under positivity and temporal monotonicity assumptions on the boundary input. The stability estimate also requires the final states to be bounded below by a positive constant. For the diffusivity, \cite[Corollary~4.2, equation~(4.11)]{Isakov} proves uniqueness on an interval, with zero initial data, a prescribed coefficient value at one endpoint, homogeneous Neumann data at that endpoint, and a nonzero Dirichlet input at the other endpoint that is nondecreasing and convex in time and satisfies the stated compatibility conditions.
	
	An early study of zeroth-order coefficient recovery from initial and final states is \cite{Rundell1987}. For a semilinear equation with a prescribed nonlinearity, \cite[Theorem~1]{Choulli1994} proves existence for the recovery of a spatial zeroth-order coefficient on convex domains under small-time and data assumptions. Its continuous dependence estimate in \cite[Theorem~2]{Choulli1994} does not require convexity. With nonzero initial data, \cite{ChoulliYamamoto1997} proves local well-posedness near the zero coefficient under an a priori restriction on its support and additional assumptions on the data. In these problems, the principal diffusion operator is prescribed.
	
	For homogeneous Dirichlet data and zero source, \cite[Definition~1.1 and Theorem~1.1]{Triki} proves global $L^2/H^2$ Lipschitz stability in dimensions two and three for sufficiently large observation times. The coefficients have prescribed boundary values and a uniform $C^1$ bound, together with a common quantitative gap condition between distinct Dirichlet eigenvalues. The initial datum satisfies a quantitative dominance condition on its weighted positive and negative parts; in particular, every nonzero nonnegative $L^2$ initial datum is admissible; see \cite[equation~(3)]{Triki}. The proof separates the first spectral mode from a faster decaying remainder and estimates the change in the spectral projections as the diffusivity varies. The decay of the remainder permits absorption for large time. 
	
	For equations with a known time-independent source, \cite[Assumption~1 and Theorem~1]{JinLuQuanZhou} gives a weighted estimate for the diffusivity difference on smooth convex domains in dimensions at most three. The estimate contains a term controlled by the final-state difference and a remainder that decays with the observation time. Under the positivity condition in \cite[equation~(2.7)]{JinLuQuanZhou}, sufficiently large time gives conditional H\"older stability, with the $L^2$ coefficient difference bounded by the square root of the $H^1$ seminorm of the final-state difference. The argument applies to both standard and time-fractional diffusion and is also used to analyze a numerical reconstruction. 
	
	Final and integral observations are also used in \cite{PrilepkoKostin1993} to study inverse source problems with a prescribed principal operator. For sources of a specified form, that work establishes the Fredholm property and gives existence, uniqueness, and stability results. A semigroup approach to source recovery from final data is developed in \cite{HasanovSlodicka2013}. Its representation formula describes the possible nonuniqueness and provides sufficient conditions for uniqueness.
	
	With the diffusion parameter and the remaining forward coefficients prescribed, \cite[main theorem, p.~243]{ChoulliYamamoto} determines a spatial source factor from one final state. The known factor multiplying the source is positive at the final time, and the stability estimate is in Hölder norms. For every fixed $s_0>0$, the result holds for all diffusion parameters $s>s_0$ outside a finite exceptional set. The proof writes the source equation as a compact perturbation of an invertible multiplication operator. The semigroup estimate in \cite[Lemma~5]{ChoulliYamamoto} gives a contraction for large $s$, and analytic Fredholm theory extends invertibility to the stated parameter set.
	
	A single final state is used in \cite{ChoulliYamamoto2008} to determine a zeroth-order coefficient, an initial state, and a Robin boundary coefficient. The zeroth-order coefficient is assumed known in an interior subdomain. The initial state is nonnegative, nonzero, and compactly supported, and the known source satisfies sign and time analyticity conditions. The principal diffusion operator is fixed.
	
	Existence for a one-dimensional terminal diffusivity identification problem is studied in \cite{Kozhanov}. Simultaneous determination of a time-independent source and a diffusivity is considered in \cite{DeCezaroJohansson}. Its multidimensional uniqueness result assumes that the diffusivity is known in a neighborhood of the boundary, together with regularity bounds on the solution and differential conditions on the final profile; see \cite[Assumption~1 and Theorem~5]{DeCezaroJohansson}. For a prescribed principal part, \cite[Theorem~1.4]{ImanuvilovYamamoto} proves global $L^2$ Lipschitz stability for a zeroth-order coefficient. The estimate uses $H^4$ norms of the initial and final state differences and an $L^2$ norm of the time derivative of the conormal flux difference on a relatively open boundary portion near both time endpoints. The coefficient boundary values are prescribed, the compared solutions have the same Dirichlet boundary values, and one of them is assumed to stay uniformly away from zero near both time endpoints.
	
	Carleman estimates provide another classical approach to inverse parabolic problems; see \cite[Sections~6 and~8.2]{Yamamoto2009}. For the recovery of an anisotropic principal coefficient, \cite[Theorems~1.1--1.2]{YuanYamamoto2009} chooses finitely many interior sources using a prescribed reference coefficient. Approximate controllability produces intermediate states satisfying a nondegeneracy condition, and a Carleman estimate gives Lipschitz stability from these spatial states together with time-dependent normal derivatives on a boundary subset. The coefficients satisfy a common regularity bound and are prescribed in a neighborhood of the boundary.

	\subsection{Main result}
	
	We prove global uniqueness and Lipschitz stability from one final state in every dimension $n\ge2$, giving an affirmative answer to \cite[Problem~8.2]{Alessandrini} on each prescribed uniformly elliptic $C^{1,\alpha}$-bounded class with known coefficient boundary values. We take $h=1$ and $g=\e^{\lambda t}$, with $\lambda$ chosen from the domain, the coefficient bounds, and the prescribed positive observation time.
	
	We write $\norm{\cdot}_p$ for the $L^p(\Om)$ norm and take all spatial norms without an indicated domain on $\Om$. Fix $0<\alpha<1$, $0<\kappa<1$, and $0<M<\infty$, and set
	\begin{equation}\label{eq:class}
		\begin{aligned}
			\mathcal A_{\kappa,M}=\{a\in C^{1,\alpha}(\overline\Om;\R):\kappa\le a\le\kappa^{-1},\ \norm{a}_{C^{1,\alpha}(\overline\Om)}\le M\}.
		\end{aligned}
	\end{equation}
	We use a norm that dominates $\norm{a}_\infty$ and $\norm{\nabla a}_\infty$. The bounds $\kappa$ and $M$ are known before the experiment. For $\lambda>0$, consider
	\begin{equation}\label{eq:experiment}
		\begin{aligned}
			\begin{cases}
				\partial_tu-\Div(a\nabla u)=0 &\text{in }\Om\times(0,\infty),\\
				u=\e^{\lambda t} &\text{on }\partial\Om\times(0,\infty),\\
				u(\cdot,0)=1 &\text{in }\Om.
			\end{cases}
		\end{aligned}
	\end{equation}
	The initial and boundary values agree at $t=0$. The solution is constructed in Section~\ref{sec:normalized}, where its spatial regularity at positive times is also established.
	
	\begin{theorem}\label{thm:global}
		For every $T_*>0$, there are $\lambda,C_*>0$, depending only on $\Om,n,\alpha,\kappa,M,T_*$, with the following property. If $a_1,a_2\in\mathcal A_{\kappa,M}$ have the same boundary trace and $u_j$ solve \eqref{eq:experiment} with this $\lambda$, then
		\begin{equation}\label{eq:global-stability}
			\begin{aligned}
				\norm{a_1-a_2}_{L^2(\Om)}\le C_*\norm{u_1(T)-u_2(T)}_{H^2(\Om)},\quad T\ge T_*.
			\end{aligned}
		\end{equation}
		In particular, equality of the two final states at any one time $T\ge T_*$ implies $a_1=a_2$. The same exponential rate and stability constant work for every admissible coefficient pair and every $T\ge T_*$.
	\end{theorem}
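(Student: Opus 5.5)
The idea is to normalize by the exponential input. Write $u_j=\e^{\lambda t}(1-v_j)$. Then $v_j$ solves
\begin{equation}
\partial_tv_j+\lambda v_j-\Div(a_j\nabla v_j)=\lambda \quad\text{in }\Om\times(0,\infty),
\end{equation}
with $v_j=0$ on $\partial\Om$ and $v_j(0)=0$. As $t\to\infty$, $v_j(t)$ converges exponentially fast to the stationary solution $w_j$ of
\begin{equation}
\lambda w_j-\Div(a_j\nabla w_j)=\lambda,\qquad w_j|_{\partial\Om}=0.
\end{equation}
For large $\lambda$, $w_j$ is close to $1$ away from a boundary layer of width about $\lambda^{-1/2}$. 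The remainder $r_j=v_j-w_j$ solves the homogeneous problem with initial datum $-w_j$, so it decays like $\e^{-(\lambda+\mu_1)t}$. Also $\partial_tv_j=\partial_tr_j$, and $\partial_t u_j=\lambda u_j-\e^{\lambda t}\partial_t v_j$. The point is that, after dividing by $\e^{\lambda T}$, the unobserved time derivative equals $\lambda$ times the observed state plus a term of size $\e^{-\lambda T_*}$ relative to the data. This decay rate is uniform in $T\ge T_*$.

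Subtracting the two equations at time $T$ gives, for $\delta=a_1-a_2$ and $U=u_1(T)-u_2(T)$,
\begin{equation}
\Div(\delta\nabla u_1(T))=\partial_t(u_1-u_2)(T)-\Div(a_2\nabla U).
\end{equation}
The term $\Div(a_2\nabla U)$ is bounded in $L^2$ by $C\norm{U}_{H^2}$ through the $C^{1}$ bound on $a_2$. The time-derivative difference is written as $\lambda U$ minus $\e^{\lambda T}\partial_t(r_1-r_2)(T)$. The last piece is estimated by parabolic stability in the coefficient. The difference $r_1-r_2$ solves an equation with source $\Div(\delta\nabla r_2)$ and initial datum $w_2-w_1$; this gives a bound of order $\e^{-c\lambda T_*}\mathrm{poly}(\lambda)\norm{\delta}_2$ after normalization. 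That is small relative to $\norm\delta_2$ once $\lambda$ is large, which is how it will be absorbed.

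What remains is a first-order transport equation $\Div(\delta\nabla\phi)=F$, where $\phi=\e^{-\lambda T}u_1(T)=1-v_1(T)$, and $\delta=0$ on $\partial\Om$ since the traces agree. I would use the weighted energy identity of Richter and Bonito et al.: multiply by $\delta\,\psi$ with a suitable weight $\psi$ and integrate by parts. This uses $\Div(\delta^2\nabla\phi)=2\delta\nabla\delta\cdot\nabla\phi+\delta^2\Delta\phi$. The key structural fact is that $\phi$ is close to $\lambda^{-1}$ times a function with $-\Div(a_1\nabla\cdot)\approx$ positive. More precisely, $\Div(a_1\nabla\phi)=-\Div(a_1\nabla w_1)+\text{small}=\lambda(1-w_1)+\text{small}$, and this is strictly positive in $\Om$ by the maximum principle ($w_1<1$). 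This positive source is exactly the condition under which the Bonito--Cohen--DeVore--Petrova--Welper argument gives $L^2$ control of $\delta$ without any critical-point assumption. I would also use the Lipschitz bound on $\delta$ (from $M$) to handle the boundary layer where $1-w_1$ is small.

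I expect the main obstacle to be this step. Their estimate is only H\"older with $H^1$ data. To get Lipschitz with $H^2$ data, I would multiply the identity $\Div(\delta\nabla\phi)=F$ by $\delta/a_1$ and use $\Div(a_1\nabla\phi)\ge c_\lambda>0$ quantitatively. Integration by parts gives $\frac12\int \delta^2\,a_1^{-1}\Div(a_1\nabla\phi)$ plus terms with $\nabla a_1$, and these must be dominated. I would first try to make $\lambda$ large so that the positive term, of size $\lambda\,\inf(1-w_1)$, beats them; the known coefficient near the boundary and $\delta|_{\partial\Om}=0$ control the layer. Cauchy--Schwarz then yields $\norm\delta_2\le C\norm F_2\le C(\norm U_{H^2}+\epsilon(\lambda)\norm\delta_2)$. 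The last step is to choose $\lambda$ so that $\epsilon(\lambda)C<1/2$, which gives the estimate \eqref{eq:global-stability} with constants depending only on the listed quantities.
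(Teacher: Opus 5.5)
Your skeleton is the paper's: normalize by $\e^{\lambda t}$, use the equation at time $T$ with the explicit unobserved term $\partial_tv_j(T)=\lambda\e^{-TA_j}1$, where $A_j=L_{a_j}+\lambda I$, subtract, test with $\delta/a_1$, and absorb a perturbation decaying like $\e^{-\lambda T_*}$. Bounding $\partial_t(v_1-v_2)(T)$ directly, rather than splitting it as $S_1d+(S_1-S_2)P_2$, is a harmless variant.

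The coercivity step, however, would not close as you describe it. With $\rho=\delta/a_1$ the identity is exact: $\int_\Om\rho\,\Div(\delta\nabla\phi)\,dx=\frac12\int_\Om\rho^2\,\Div(a_1\nabla\phi)\,dx$, with no $\nabla a_1$ terms. This matters, because the positive term could not dominate such terms.

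You also have the profile of $1-w_1$ backwards. Since $w_1=0$ on $\partial\Om$, one has $1-w_1=1$ at the boundary, and $1-w_1$ is exponentially small in $\sqrt\lambda$ in the interior. Hence $\lambda\inf(1-w_1)\to0$ as $\lambda\to\infty$, so enlarging $\lambda$ weakens the positive term. A Lipschitz bound on $\delta$ together with $\delta|_{\partial\Om}=0$ cannot repair an interior degeneracy. Moreover, only the traces of the $a_j$ are known, not the coefficients near $\partial\Om$.

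Qualitative positivity from the maximum principle is therefore not enough. You need a lower bound that is uniform over $\mathcal A_{\kappa,M}$ and whose loss is only subexponential in $\lambda$. In the paper, the barrier $\exp(d_0\sqrt\lambda(x_1-\max_{\overline\Om}x_1))$, $d_0=\sqrt{2/\kappa}$, is a subsolution once $\sqrt\lambda\ge d_0M$, and it gives $1-w_1\ge\e^{-D_0\sqrt\lambda}$. One has $\Div(a_1\nabla\phi)=\lambda(1-w_1)-\lambda\e^{-TA_1}(1-w_1)$, and the second term is at most $\lambda\e^{-\lambda T}$. So the condition $\lambda T_*-D_0\sqrt\lambda\ge\log2$ yields $\Div(a_1\nabla\phi)\ge\frac\lambda2\e^{-D_0\sqrt\lambda}$. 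Your last step, choosing $\lambda$ with $\epsilon(\lambda)C<\frac12$, treats $C$ as independent of $\lambda$, whereas in fact $C\sim\lambda^{-1}\e^{D_0\sqrt\lambda}$. The absorption works only because this loss is beaten by $\e^{-\lambda T_*}$.

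The second gap is the perturbation bound, which is the technical core and which you only assert. Since $\partial_t(v_1-v_2)(T)=\lambda\e^{-\lambda T}(\e^{-TL_{a_1}}-\e^{-TL_{a_2}})1$, you need $\norm{(\e^{-TL_{a_1}}-\e^{-TL_{a_2}})1}_2\le C\norm{\delta}_2$. This must hold uniformly over the class and over $T\ge T_*$, with no derivatives or sup norm of $\delta$, since only $\norm{\delta}_2$ can be absorbed.

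A Duhamel argument has to keep $\delta$ inside a divergence and use a duality bound such as $\norm{\e^{-\tau L_{a_1}}\Div G}_2\le C\tau^{-1/2}\norm{G}_2$. This forces the gradient of the other solution into $L^\infty$, uniformly in the class. With your formulation via $r_1-r_2$ you must also extract the time derivative at $T$, which costs one more derivative. Differentiating the equation in $t$ avoids that, but then the source is $\lambda\Div(\delta\nabla\e^{-sA_2}1)$. Because the datum $1$ violates the Dirichlet condition, $\norm{\nabla\e^{-sL_{a_2}}1}_\infty$ blows up like $s^{-1/2}$ as $s\downarrow0$, so a uniform boundary gradient estimate for the heat semigroup is needed.

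The paper sidesteps this in Lemma~\ref{lem:multiplier} with the resolvent identity on a sector contour common to the whole class. It combines three ingredients:
\begin{enumerate}
\item $W^{1,\infty}$ bounds on $R_a(\zeta)z$ growing polynomially in $|\zeta|$, obtained by bootstrapping $W^{2,p}$ estimates;
\item the weak bound $\norm{R_a(\zeta)(-\Div G)}_2\le C|\zeta|^{-1/2}\norm{G}_2$ on the rays;
\item exponential decay of the holomorphic function along the contour.
\end{enumerate}
Running that argument with $\e^{-T\zeta}$ in place of $F_T(\lambda+\zeta)$ gives exactly the estimate your variant needs.
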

	
	For any prescribed observation time $T>0$, take $T_*=T$. The input is independent of the unknown coefficient within each prescribed class, but may depend on its common regularity bound $M$.
	
	The choice of a constant initial datum is not essential. Corollary~\ref{cor:positive-inputs} allows any prescribed positive spatial profile, with a common exponential rate for profiles having fixed lower and regularity bounds. We also obtain a different type of experiment near a known smooth positive reference coefficient: Theorem~\ref{local:thm:main} gives pairwise Lipschitz stability with a time-independent boundary input. Both unknown coefficients may vary near the reference, and the same input is used for the pair.
	
	Theorems~\ref{thm:exponential-sharpness} and~\ref{affine:thm:sharpness} show that, for the fixed exponential and affine experiments, no uniform $L^2$ Lipschitz estimate with $H^s$ final data, $0\le s<2$, holds even in an arbitrarily small $C^{1,\alpha}$ neighborhood of the constant coefficient $1$.

	\subsection{Ideas of the proof}
	
	The exponential input introduces a parameter that we can choose before the experiment. Multiplying the solution by $\e^{-\lambda t}$ makes its boundary value constant and shifts the Dirichlet diffusion operator by $\lambda$. Subtracting the normalized solution from $1$ then gives a profile with zero initial and boundary values and a known constant forcing term. Its evolution formula expresses the unobserved time derivative at $T$ as a function of the shifted operator applied to the final profile. This yields an exact equation for the final state, as shown in Section~\ref{sec:normalized}.
	
	An elliptic barrier gives a positive lower bound for the Dirichlet operator applied to this profile. When two coefficients have the same boundary values, testing their difference equation with the coefficient difference divided by one of the coefficients removes the boundary terms. Integration by parts then turns the positive lower bound into an $L^2$ estimate for the coefficient difference. This argument allows the difference to change sign and the final profile to have critical points.
	
	The remaining issue is to compare the operator functions associated with the two unknown diffusivities. Section~\ref{sec:multiplier} uses the resolvent identity on a contour common to the coefficient class. The coefficient difference is kept inside a divergence, so the estimate involves its $L^2$ norm without differentiating it. The resulting bound decays exponentially with the input rate at every fixed positive time. Since the argument estimates the operator function directly, it does not require a separation of individual eigenvalues.
	
	The decay in this operator estimate dominates the loss in the elliptic lower bound as $\lambda$ increases. A rate chosen uniformly over the coefficient class therefore allows the term involving the unknown coefficient difference to be absorbed. The remaining term is controlled by the $H^2$ difference of the measured final states. Section~\ref{sec:global-proof} carries out this comparison and selects the input rate.

	\para{Organization of the paper} In Section~\ref{sec:global-recovery}, we prove Theorem~\ref{thm:global}. Section~\ref{sec:positive-inputs} extends the experiment to positive spatial profiles. Section~\ref{sec:local} proves local pairwise stability with time-independent inputs near a known reference. Finally, we prove optimality of the $H^2$ final-data norm for the exponential and affine experiments in Section~\ref{sec:optimality}.

	\section{Global recovery}\label{sec:global-recovery}
	
	\subsection{Uniform elliptic and positive-time estimates}\label{sec:uniform}
	
	All constants in this subsection are uniform for $a\in\mathcal A_{\kappa,M}$. The inner product on complex $L^2(\Omega)$ is linear in its first argument.
	
	The closed form $\mathfrak q_a(v,w)=\int_\Om a\nabla v\cdot\nabla\overline w\,dx$, with domain $H_0^1(\Om)$, defines the positive self-adjoint Dirichlet operator $L_a=-\Div(a\nabla)$. Since $H_0^1(\Om)$ embeds compactly into $L^2(\Om)$, this operator has compact resolvent. Uniform ellipticity and Poincar\'e's inequality give a constant $\rho_*>0$, depending only on $\Om$ and $\kappa$, such that
	\begin{equation}\label{eq:spectral-lower}
		\begin{aligned}
			\inf\sigma(L_a)\ge\rho_*
			\quad\text{for every }a\in\mathcal A_{\kappa,M}.
		\end{aligned}
	\end{equation}
	For a function with nonzero boundary values, we write $-\Div(a\nabla v)$ directly.
	
	We use the Dirichlet regularity results in \cite[Theorems~6.6, 8.12, 9.13 and~9.15]{GilbargTrudinger2001}. The weak $H^2$ regularity theorem applies to $-\Div(a\nabla v)$, while the $W^{2,p}$ and Schauder estimates apply to $-a\Delta v-\nabla a\cdot\nabla v$. Their constants are controlled by the prescribed coefficient bounds and the fixed domain. We record the consequences needed below.
	
	\begin{lemma}\label{lem:elliptic}
		For every $a\in\mathcal A_{\kappa,M}$,
		\begin{equation}\label{eq:domain}
			\begin{aligned}
				\operatorname{Dom}(L_a)=H^2(\Om)\cap H_0^1(\Om).
			\end{aligned}
		\end{equation}
		On this domain,
		\[
		\begin{aligned}
			C^{-1}\norm{v}_{H^2}\le\norm{v}_2+\norm{L_av}_2\le C\norm{v}_{H^2}.
		\end{aligned}
		\]
		For functions with zero boundary trace and the regularity indicated on the left,
		\begin{equation}\label{eq:w2p}
			\begin{aligned}
				\norm{v}_{W^{2,p}}\le C_p(\norm{L_av}_p+\norm{v}_p),\quad 2\le p<\infty,
			\end{aligned}
		\end{equation}
		and
		\begin{equation}\label{eq:schauder}
			\begin{aligned}
				\norm{v}_{C^{2,\alpha}}\le C_\alpha(\norm{L_av}_{C^\alpha}+\norm{v}_\infty).
			\end{aligned}
		\end{equation}
		If $v\in H_0^1(\Om)$ solves $-\Div(a\nabla v)=f$ weakly, then $v\in W^{2,p}$ for $f\in L^p$, $2\le p<\infty$, and $v\in C^{2,\alpha}$ for $f\in C^\alpha$, with the corresponding estimates above. The constants depend only on $\Om,n,\kappa,M$ and, where applicable, $p,\alpha$.
	\end{lemma}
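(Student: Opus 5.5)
The plan is to reduce every claim to the cited results in \cite{GilbargTrudinger2001}, using that for $a\in C^{1,\alpha}$ and $v\in H^2$ the divergence form can be expanded as $-\Div(a\nabla v)=-a\Delta v-\nabla a\cdot\nabla v$. This nondivergence operator has leading coefficient $a\in[\kappa,\kappa^{-1}]$ and drift $\nabla a$ with $\norm{a}_{C^{1,\alpha}}\le M$. So its ellipticity constant and coefficient norms (in $C^0$ for $W^{2,p}$, in $C^\alpha$ for Schauder) are controlled by $\kappa,M$. This is what makes all constants uniform over $\mathcal A_{\kappa,M}$.

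\emph{Domain identification.} If $v\in\operatorname{Dom}(L_a)$, then $v\in H_0^1$ and $\mathfrak q_a(v,w)=\ip{f}{w}$ for all $w\in H_0^1$, with $f=L_av\in L^2$. The global weak $H^2$ regularity theorem \cite[Theorem~8.12]{GilbargTrudinger2001}, applicable since $a\in C^{0,1}(\overline\Om)$ and $\partial\Om$ is smooth, gives $v\in H^2$ and
\[
\norm{v}_{H^2}\le C(\norm{v}_2+\norm{f}_2).
\]
Conversely, for $v\in H^2\cap H_0^1$, integration by parts shows $\mathfrak q_a(v,w)=\ip{-\Div(a\nabla v)}{w}$ with $-\Div(a\nabla v)\in L^2$. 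Hence $v\in\operatorname{Dom}(L_a)$, and
\[
\norm{L_av}_2\le\norm{a}_\infty\norm{\Delta v}_2+\norm{\nabla a}_\infty\norm{\nabla v}_2\le C\norm{v}_{H^2}.
\]
This proves \eqref{eq:domain} and the two-sided equivalence. Note that \cite[Theorem~8.12]{GilbargTrudinger2001} bounds the constant by $\kappa$, $\norm{a}_{C^{0,1}}$ and $\Om$, hence by $\kappa,M,\Om,n$.

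\emph{$W^{2,p}$ and Schauder estimates.} For $v\in W^{2,p}$ with zero trace, apply \cite[Theorem~9.13]{GilbargTrudinger2001} to the nondivergence operator with continuous leading coefficient $a$. The modulus of continuity of $a$ is uniformly controlled by the $C^{1,\alpha}$ bound, and the drift is bounded. This yields \eqref{eq:w2p}. For $v\in C^{2,\alpha}$ with zero trace, \cite[Theorem~6.6]{GilbargTrudinger2001} with coefficients $a,\nabla a\in C^\alpha$ gives \eqref{eq:schauder}.

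\emph{Regularity of weak solutions.} Let $v\in H_0^1$ solve $-\Div(a\nabla v)=f$ weakly with $f\in L^p$, $p\ge2$. First, $v\in H^2$ by the step above. Then $v$ is a strong solution of the nondivergence equation with right-hand side $f$. We bootstrap by Sobolev embedding: if $v\in W^{2,q}$, then $\nabla v\in L^{q^*}$. Since $\nabla a$ is bounded, the equation $-a\Delta v=f+\nabla a\cdot\nabla v$ then lets \cite[Theorem~9.15]{GilbargTrudinger2001} (existence and uniqueness in $W^{2,r}\cap W_0^{1,r}$) raise the integrability to $r=\min(p,q^*)$. One needs that the $W^{2,r}$ solution coincides with $v$; this follows from uniqueness in $W^{2,2}\cap W^{1,2}_0$ and $r\ge2$. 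Finitely many steps reach $r=p$.

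For $f\in C^\alpha$, we have $f\in L^p$ for all $p$. Taking $p>n$ gives $v\in C^{1,\beta}$, so the right-hand side $f+\nabla a\cdot\nabla v$ lies in $C^\alpha$. The Schauder existence theory \cite[Theorem~6.14]{GilbargTrudinger2001} for $-a\Delta$ with $C^\alpha$ coefficient, together with uniqueness, then gives $v\in C^{2,\alpha}$.

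The main obstacle is bookkeeping rather than substance. One must check that every cited constant depends only on the listed quantities, and that identifications between weak, strong, and classical solutions are justified by uniqueness, which holds since $L_a\ge\rho_*>0$ by \eqref{eq:spectral-lower}. The terms $\norm{v}_p$ and $\norm{v}_\infty$ on the right-hand sides absorb the lower-order contributions. For $\norm{v}_\infty$ in the bootstrap, use $\norm{v}_p\le C\norm{v}_\infty$ together with \eqref{eq:w2p} for large $p$ to control the $C^{1,\beta}$ norm.
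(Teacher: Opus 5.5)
Your proposal is correct and follows the paper's route: expand $-\Div(a\nabla v)=-a\Delta v-\nabla a\cdot\nabla v$ and invoke \cite[Theorems~6.6, 8.12, 9.13 and~9.15]{GilbargTrudinger2001}, with constants controlled by $\kappa$, $M$ and $\Om$; the paper does no more than record these citations. Your added details are sound, with two small remarks. You could skip the bootstrap by applying Theorem~9.15 directly to the full operator with bounded drift $\nabla a$ and then identifying the solution with $v$ by uniqueness of weak solutions in $H_0^1$. In the $C^{2,\alpha}$ step, take $p\ge n/(1-\alpha)$ rather than merely $p>n$, so that $\nabla v\in C^\alpha$.
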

	
	The semigroup preserves positivity and satisfies
	\[
	\begin{aligned}
		\norm{\e^{-tL_a}f}_\infty\le\norm{f}_\infty,\quad t\ge0;
	\end{aligned}
	\]
	see \cite[Corollary~4.3 and Theorem~4.6]{Ouhabaz2005}. We use the same notation for its consistent extension to $L^p(\Om)$. By \cite[Theorem~3.13]{Ouhabaz2005}, this extension is holomorphic and contractive on a sector whose angle depends only on $p$, for $1<p<\infty$. Cauchy's formula, as in \cite[equation~(1.19)]{Ouhabaz2005}, gives
	\[
	\begin{aligned}
		\norm{\e^{-tL_a}f}_p+t\norm{L_a\e^{-tL_a}f}_p\le C_p\norm{f}_p,\quad t>0.
	\end{aligned}
	\]
	Here $L_a$ on $L^p$ denotes the negative of the generator of the consistent extension. Since the sector angle and the contraction bound are independent of $a$, so is $C_p$. Taking $p=2n$ and applying \eqref{eq:w2p} and Sobolev embedding gives the uniform $L^\infty$ to $C^\alpha$ estimate at times $t\ge T_*>0$.
	
	For bounded $f$, elliptic regularity gives $v=L_a^{-1}f\in W^{2,2n}(\Om)\cap H_0^1(\Om)$, satisfying $a\Delta v+\nabla a\cdot\nabla v=-f$. The weak maximum principle of \cite[Theorem~9.1, equation~(9.4)]{GilbargTrudinger2001} gives $\norm{v}_\infty\le E\norm{f}_\infty$, with $E$ depending only on $\Om,n,\kappa,M$. For complex-valued $f$, apply it to the real and imaginary parts and increase $E$ by a fixed factor if necessary.
	
	\begin{lemma}\label{lem:positive-time}
		For every $T_*>0$, there are constants $E,H_*,B_0<\infty$, depending only on $\Om,n,\alpha,\kappa,M,T_*$, such that
		\begin{equation}\label{eq:inverse-sup}
			\begin{aligned}
				\norm{L_a^{-1}f}_\infty\le E\norm{f}_\infty,
			\end{aligned}
		\end{equation}
		\begin{equation}\label{eq:heat-smoothing}
			\begin{aligned}
				\norm{\e^{-TL_a}f}_{C^\alpha}\le H_*\norm{f}_\infty,\quad T\ge T_*,
			\end{aligned}
		\end{equation}
		and
		\begin{equation}\label{eq:correction-smoothing}
			\begin{aligned}
				\norm{L_a^{-1}(I-\e^{-TL_a})f}_{C^{2,\alpha}}\le B_0\norm{f}_{C^\alpha},\quad T\ge T_*.
			\end{aligned}
		\end{equation}
		The first two estimates hold for $f\in L^\infty(\Om)$, and the third for $f\in C^\alpha(\overline\Om)$. The constant $E$ is independent of $T_*$. No boundary condition on $f$ is required.
	\end{lemma}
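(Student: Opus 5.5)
The three estimates are of different types, and I will prove them separately using Lemma~\ref{lem:elliptic}, the $L^p$ semigroup bounds recorded above, and the maximum principle.

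\textbf{Estimate \eqref{eq:inverse-sup}.} This is exactly the weak maximum principle remark preceding the lemma. For real bounded $f$, Lemma~\ref{lem:elliptic} gives $v=L_a^{-1}f\in W^{2,2n}\cap H_0^1$. Then \cite[Theorem~9.1]{GilbargTrudinger2001} applied to $a\Delta v+\nabla a\cdot\nabla v=-f$ gives $\norm{v}_\infty\le E\norm{f}_\infty$. Here $E$ depends on $\operatorname{diam}\Om$, $n$, and the bounds $\kappa$ and $\norm{\nabla a}_\infty/\kappa\le M/\kappa$. The complex case follows by splitting into real and imaginary parts. No $T_*$ enters.

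\textbf{Estimate \eqref{eq:heat-smoothing}.} Fix $p=2n$. For $T\ge T_*$ I would write $\e^{-TL_a}f=\e^{-(T_*/2)L_a}\e^{-(T-T_*/2)L_a}f$. Set $g=\e^{-(T-T_*/2)L_a}f$. The $L^\infty$ contraction gives $\norm{g}_\infty\le\norm{f}_\infty$, hence $\norm{g}_{p}\le|\Om|^{1/p}\norm{f}_\infty$. The element $w=\e^{-(T_*/2)L_a}g$ lies in the domain of the $L^p$ generator. By the Cauchy-formula bound, $\norm{w}_p+\norm{L_aw}_p\le C_p(1+2/T_*)\norm{g}_p$.

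To apply \eqref{eq:w2p}, I need $w$ to have zero trace and the $W^{2,p}$ regularity. This follows because the $L^p$ generator domain is $W^{2,p}\cap W^{1,p}_0$ by consistency with the $L^2$ operator: for $L^2$ data, $w\in H^2\cap H^1_0$ and $L_aw\in L^p$, and Lemma~\ref{lem:elliptic} then upgrades $w$ to $W^{2,p}$. Then \eqref{eq:w2p} and Morrey's embedding $W^{2,2n}\hookrightarrow C^{1,1/2}\hookrightarrow C^\alpha$ give $\norm{w}_{C^\alpha}\le H_*\norm{f}_\infty$ with $H_*$ uniform.

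\textbf{Estimate \eqref{eq:correction-smoothing}.} I would split
\[
L_a^{-1}(I-\e^{-TL_a})f=L_a^{-1}f-\e^{-TL_a}L_a^{-1}f .
\]
For the first term, Lemma~\ref{lem:elliptic} (Schauder) gives $v_1=L_a^{-1}f\in C^{2,\alpha}$. Combining \eqref{eq:schauder} with \eqref{eq:inverse-sup},
\[
\norm{v_1}_{C^{2,\alpha}}\le C_\alpha(\norm{f}_{C^\alpha}+E\norm{f}_\infty).
\]
This needs no boundary condition on $f$, since $v_1$ itself vanishes on $\partial\Om$.

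For the second term, $v_2=\e^{-TL_a}v_1$ has zero trace, and $L_av_2=\e^{-TL_a}f$. By \eqref{eq:heat-smoothing}, $\norm{L_av_2}_{C^\alpha}\le H_*\norm{f}_\infty$. By the $L^\infty$ contraction and \eqref{eq:inverse-sup}, $\norm{v_2}_\infty\le E\norm{f}_\infty$. The Schauder part of Lemma~\ref{lem:elliptic} then applies to $v_2$ as the $H^1_0$ weak solution of $-\Div(a\nabla v_2)=\e^{-TL_a}f\in C^\alpha$. This gives $\norm{v_2}_{C^{2,\alpha}}\le C_\alpha(H_*+E)\norm{f}_\infty$. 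Adding the two bounds yields $B_0$.

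\textbf{Main obstacle.} The only delicate point is the identification of the $L^p$ generator domain with $W^{2,p}\cap W^{1,p}_0$, which is needed to invoke \eqref{eq:w2p} on $\e^{-tL_a}g$. I would handle it by reducing to $L^2$. Since $\Om$ is bounded, $g\in L^\infty\subset L^2$, and the $L^2$ and $L^p$ semigroups agree on $L^2\cap L^p$. Hence $w\in H^2\cap H^1_0$ and $L_aw$ is the same function in both realizations, and it lies in $L^p$. Lemma~\ref{lem:elliptic}'s $W^{2,p}$ regularity statement, applied to this $H_0^1$ weak solution with $L^p$ right side, then finishes the argument.
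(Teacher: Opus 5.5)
Your proposal is correct and follows essentially the paper's argument: the maximum-principle bound for $L_a^{-1}$, the $L^{2n}$ holomorphic-semigroup bound combined with \eqref{eq:w2p} and Sobolev embedding for \eqref{eq:heat-smoothing}, and Schauder estimates for \eqref{eq:correction-smoothing}. The paper applies \eqref{eq:schauder} in one step to $z=L_a^{-1}(I-\e^{-TL_a})f$, using $\norm{z}_\infty\le 2E\norm{f}_\infty$, which is your two-piece split combined; your splitting of the semigroup at $T_*/2$ is harmless but unnecessary, since the Cauchy-formula bound at time $T$ already gives the factor $T^{-1}\le T_*^{-1}$, and your reduction of the $L^{2n}$ generator to the $L^2$ operator makes explicit a consistency step that the paper leaves implicit.
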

	
	To obtain \eqref{eq:correction-smoothing}, set $z=L_a^{-1}(I-\e^{-TL_a})f$, then $L_az=f-\e^{-TL_a}f$ and $\norm{z}_\infty\le2E\norm{f}_\infty$. Applying \eqref{eq:schauder} and \eqref{eq:heat-smoothing} gives
	\[
	\begin{aligned}
		\norm{z}_{C^{2,\alpha}}\le C_\alpha(1+H_*+2E)\norm{f}_{C^\alpha}.
	\end{aligned}
	\]
	Thus, one may take $B_0=C_\alpha(1+H_*+2E)$.

	\subsection{The final-state equation and positivity}\label{sec:normalized}
	
	Fix $\lambda>0$ and $a\in\mathcal A_{\kappa,M}$, and set $A_a=L_a+\lambda I$. For the experiment \eqref{eq:experiment}, write
	\begin{equation}\label{eq:normalization-intro}
		\begin{aligned}
			W_a(t)=\e^{-\lambda t}u_a(t),\quad P_a(t)=1-W_a(t).
		\end{aligned}
	\end{equation}
	Substitution of $u_a=\e^{\lambda t}(1-P_a)$ gives
	\begin{equation}\label{eq:normalized-pde}
		\begin{aligned}
			\begin{cases}
				\partial_tP_a+A_aP_a=\lambda &\text{in }\Om\times(0,\infty),\\
				P_a=0 &\text{on }\partial\Om\times(0,\infty),\\
				P_a(\cdot,0)=0 &\text{in }\Om.
			\end{cases}
		\end{aligned}
	\end{equation}
	
	For $r>0$ and $f\in L^2(\Om)$, the function $\e^{-rA_a}f$ is the solution at time $r$ of $\partial_tv+A_av=0$ with homogeneous Dirichlet boundary conditions and initial datum $f$. The boundary condition holds at positive times, while the initial datum is attained in $L^2(\Om)$; thus, $f$ need not have zero boundary trace. Duhamel's formula gives
	\begin{equation}\label{eq:mild}
		\begin{aligned}
			P_a(t)=\lambda\int_0^t\e^{-rA_a}1\,dr=\lambda A_a^{-1}(I-\e^{-tA_a})1.
		\end{aligned}
	\end{equation}
	Formula \eqref{eq:mild} defines $P_a(t)\in\operatorname{Dom}(L_a)$ for $t>0$, with $P_a(t)\to0$ in $L^2(\Om)$ as $t\to 0$. Differentiation gives $\partial_tP_a(t)=\lambda\e^{-tA_a}1$, for $t>0$. Since $A_aP_a(t)=\lambda(I-\e^{-tA_a})1$, these identities show that $P_a$ solves \eqref{eq:normalized-pde}, and $u_a(t)=\e^{\lambda t}(1-P_a(t))$ solves \eqref{eq:experiment}. Positivity and $L^\infty$ contraction give $0\le\e^{-rL_a}1\le1$. Using $\e^{-rA_a}=\e^{-\lambda r}\e^{-rL_a}$ in \eqref{eq:mild}, we obtain $0\le P_a(t)\le1-\e^{-\lambda t}\le1$.
	
	Applying $L_a=A_a-\lambda I$ to \eqref{eq:mild} yields
	\begin{equation}\label{eq:lp-formula}
		\begin{aligned}
			L_aP_a(T)=\lambda(1-P_a(T)-\e^{-TA_a}1).
		\end{aligned}
	\end{equation}
	The right-hand side of \eqref{eq:lp-formula} is bounded. Hence, Lemma~\ref{lem:elliptic} gives $P_a(T)\in W^{2,p}(\Om)$ for every finite $p\ge2$. Taking $p=2n$ and using Sobolev embedding gives $P_a(T)\in C^\alpha(\overline\Om)$. Also, $\e^{-TA_a}1=\e^{-\lambda T}\e^{-TL_a}1\in C^\alpha(\overline\Om)$ by Lemma~\ref{lem:positive-time}. Applying \eqref{eq:schauder} to \eqref{eq:lp-formula} now gives $P_a(T)\in C^{2,\alpha}(\overline\Om)$. For a fixed $\lambda$ and $T_*>0$, the resulting bound for $P_a(T)$ is uniform in $a\in\mathcal A_{\kappa,M}$ and $T\ge T_*$. The final state $u_a(T)=\e^{\lambda T}(1-P_a(T))$ also belongs to $C^{2,\alpha}(\overline\Om)$.
	
	The function $v_a(t)=\e^{\lambda t}-u_a(t)$ has zero initial and boundary values and satisfies $\partial_tv_a-\Div(a\nabla v_a)=\lambda\e^{\lambda t}$. Thus, the prescribed experiment can also be written as a problem with a known source independent of $a$. The final-state differences satisfy $v_{a_1}(T)-v_{a_2}(T)=-(u_{a_1}(T)-u_{a_2}(T))$, and $P_a(t)=\e^{-\lambda t}v_a(t)$.
	
	\begin{lemma}\label{lem:final-identities}
		For $T>0$, define $F_T(s)=s/(\e^{Ts}-1)$ for $s>0$ and set $S_a^\lambda(T)=F_T(A_a)$. This operator is defined by expansion in an orthonormal eigenbasis of $A_a$: if $A_a\phi=s\phi$, then $S_a^\lambda(T)\phi=F_T(s)\phi$. One has
		\begin{equation}\label{eq:final-equation}
			\begin{aligned}
				(A_a+S_a^\lambda(T))P_a(T)=\lambda,
			\end{aligned}
		\end{equation}
		\begin{equation}\label{eq:multiplier-action}
			\begin{aligned}
				S_a^\lambda(T)P_a(T)=\lambda\e^{-TA_a}1=\partial_tP_a(T),
			\end{aligned}
		\end{equation}
		and
		\begin{equation}\label{eq:multiplier-norm}
			\begin{aligned}
				\norm{S_a^\lambda(T)}_{2\to2}\le T^{-1}.
			\end{aligned}
		\end{equation}
	\end{lemma}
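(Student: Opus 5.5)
The plan is to use the spectral calculus of the self-adjoint operator $A_a=L_a+\lambda I$. By \eqref{eq:spectral-lower}, $A_a$ has compact resolvent and spectrum in $[\rho_*+\lambda,\infty)\subset(0,\infty)$, with an orthonormal eigenbasis $\{\phi_k\}$ and eigenvalues $s_k>0$. Every operator below is then a Borel function of $A_a$, and each identity reduces to a scalar identity in $s$.

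\textbf{Norm bound.} I start with \eqref{eq:multiplier-norm}. Since $\e^{Ts}-1\ge Ts$ for $s>0$, we get $0<F_T(s)\le T^{-1}$. Hence $F_T(A_a)$ is a bounded, nonnegative, self-adjoint operator of norm at most $T^{-1}$, defined on all of $L^2(\Om)$ by the eigenbasis expansion.

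\textbf{Action on the final profile.} For \eqref{eq:multiplier-action}, I use \eqref{eq:mild}, namely $P_a(T)=\lambda A_a^{-1}(I-\e^{-TA_a})1$. In the eigenbasis this reads
\[
\ip{P_a(T)}{\phi_k}=\lambda s_k^{-1}(1-\e^{-Ts_k})\ip{1}{\phi_k}.
\]
Multiplying by $F_T(s_k)$ and using the scalar identity
\[
\frac{s}{\e^{Ts}-1}\cdot\frac{1-\e^{-Ts}}{s}=\e^{-Ts}
\]
gives $\ip{S_a^\lambda(T)P_a(T)}{\phi_k}=\lambda\e^{-Ts_k}\ip{1}{\phi_k}$. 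Summing over $k$ gives $S_a^\lambda(T)P_a(T)=\lambda\e^{-TA_a}1$, and the identity $\partial_tP_a(T)=\lambda\e^{-TA_a}1$ was already derived after \eqref{eq:mild}. The sums converge in $L^2(\Om)$ because $1\in L^2(\Om)$ and all multipliers are bounded.

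\textbf{The final-state equation.} For \eqref{eq:final-equation}, I note that $P_a(T)\in\operatorname{Dom}(L_a)=\operatorname{Dom}(A_a)$ by Lemma~\ref{lem:elliptic}. From \eqref{eq:mild} we have $A_aP_a(T)=\lambda(I-\e^{-TA_a})1$. Adding \eqref{eq:multiplier-action}, the terms $\pm\lambda\e^{-TA_a}1$ cancel and leave $\lambda\cdot1=\lambda$. Equivalently, this is the PDE \eqref{eq:normalized-pde} evaluated at $t=T$, with $\partial_tP_a(T)$ replaced by $S_a^\lambda(T)P_a(T)$.

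\textbf{Main point of care.} The only step needing care is the justification of the functional calculus manipulations, and here it is routine. We need the identification $A_a^{-1}(I-\e^{-TA_a})=g(A_a)$ with $g(s)=(1-\e^{-Ts})/s$, which holds on the eigenbasis. We also need the multiplicativity $F_T(A_a)g(A_a)=(F_Tg)(A_a)$ for bounded Borel functions. Both follow directly from the diagonal representation, so I expect no real obstacle.
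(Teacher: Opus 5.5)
Your proposal is correct and matches the paper's proof: the scalar identity $F_T(s)(1-\e^{-Ts})/s=\e^{-Ts}$, applied to \eqref{eq:mild} through the spectral calculus, gives \eqref{eq:multiplier-action}; combining this with $A_aP_a(T)=\lambda(I-\e^{-TA_a})1$ (equivalently, \eqref{eq:normalized-pde} at $t=T$) gives \eqref{eq:final-equation}; and $\e^{Ts}-1\ge Ts$ gives \eqref{eq:multiplier-norm}. One small correction to a citation: $P_a(T)\in\operatorname{Dom}(A_a)$ follows directly from \eqref{eq:mild}, because $A_a^{-1}$ maps $L^2(\Om)$ into $\operatorname{Dom}(A_a)$, and not from Lemma~\ref{lem:elliptic}.
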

	
	\begin{proof}
		For every $s>0$, $F_T(s)\frac{1-\e^{-Ts}}s=\e^{-Ts}$. Applying this scalar identity to \eqref{eq:mild} by spectral calculus gives
		\[
		\begin{aligned}
			S_a^\lambda(T)P_a(T)=\lambda F_T(A_a)A_a^{-1}(I-\e^{-TA_a})1=\lambda\e^{-TA_a}1=\partial_tP_a(T).
		\end{aligned}
		\]
		This proves \eqref{eq:multiplier-action}. Substituting it into \eqref{eq:normalized-pde} at $t=T$ proves \eqref{eq:final-equation}. Finally, $\e^{Ts}-1\ge Ts$ gives $0\le F_T(s)\le T^{-1}$, so the spectral theorem yields \eqref{eq:multiplier-norm}.
	\end{proof}
	
	\begin{lemma}\label{lem:positivity}
		Let $T_*>0$ and put
		\begin{equation}\label{eq:barrier-constants}
			\begin{aligned}
				d_0=\sqrt{2/\kappa},\quad D_0=d_0\osc_{\overline\Om}x_1.
			\end{aligned}
		\end{equation}
		If $\sqrt\lambda\ge d_0M$ and $\lambda T_*-D_0\sqrt\lambda\ge\log2$, then for every $a\in\mathcal A_{\kappa,M}$ and $T\ge T_*$,
		\begin{equation}\label{eq:positive-lp}
			\begin{aligned}
				L_aP_a(T)\ge\nu_\lambda:=\frac\lambda2\e^{-D_0\sqrt\lambda}\quad\text{on }\overline\Om.
			\end{aligned}
		\end{equation}
	\end{lemma}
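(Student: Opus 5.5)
The plan is to read the lower bound off the identity \eqref{eq:lp-formula}. Writing $W_a(T)=1-P_a(T)$ and $\e^{-TA_a}1=\e^{-\lambda T}\e^{-TL_a}1$, that identity reads
\[
L_aP_a(T)=\lambda\bigl(W_a(T)-\e^{-\lambda T}\e^{-TL_a}1\bigr).
\]
Positivity and $L^\infty$ contraction of the semigroup give $0\le\e^{-TL_a}1\le1$. Since $T\ge T_*$ and $\lambda T_*-D_0\sqrt\lambda\ge\log2$, we get $\e^{-\lambda T}\le\frac12\e^{-D_0\sqrt\lambda}$. So it suffices to prove
\[
W_a(T)\ge\e^{-D_0\sqrt\lambda}\quad\text{on }\overline\Om .
\]
Then $L_aP_a(T)\ge\lambda(\e^{-D_0\sqrt\lambda}-\frac12\e^{-D_0\sqrt\lambda})=\nu_\lambda$. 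The bound holds a.e.\ first, and then on $\overline\Om$ because $P_a(T)\in C^{2,\alpha}(\overline\Om)$.

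To bound $W_a(T)$ from below while avoiding a parabolic comparison for weak solutions, I would reduce to an elliptic problem. By \eqref{eq:mild}, $P_a(T)=\lambda A_a^{-1}1-\lambda A_a^{-1}\e^{-TA_a}1$. Here $\e^{-TA_a}1\ge0$, and $A_a^{-1}=\int_0^\infty\e^{-rA_a}\,dr$ is positivity preserving. Hence $P_a(T)\le\lambda A_a^{-1}1$, and so $W_a(T)\ge w:=1-\lambda A_a^{-1}1$. The function $w$ is the $W^{2,2n}$ (hence $C^1$) solution of
\[
-\Div(a\nabla w)+\lambda w=0\ \text{in }\Om,\qquad w=1\ \text{on }\partial\Om .
\]

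Next comes the barrier. Let $m=\min_{\overline\Om}x_1$, $\mu=d_0\sqrt\lambda$, and $\varphi(x)=\e^{-\mu(x_1-m)}$. Then $\e^{-D_0\sqrt\lambda}\le\varphi\le1$ on $\overline\Om$. A direct computation gives
\[
-\Div(a\nabla\varphi)+\lambda\varphi=\varphi\,(-a\mu^2+\mu\,\partial_1a+\lambda)\le\varphi\,(-\kappa\mu^2+\mu M+\lambda).
\]
Since $\kappa\mu^2=2\lambda$ and $\mu M=d_0M\sqrt\lambda\le\lambda$ by the hypothesis $\sqrt\lambda\ge d_0M$, the right-hand side is $\le0$. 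Thus $\varphi$ is a subsolution with $\varphi\le1=w$ on $\partial\Om$.

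The weak maximum principle for $-\Div(a\nabla\cdot)+\lambda$ with $\lambda>0$ then applies to $(\varphi-w)^+\in H_0^1(\Om)$: testing the difference inequality with it shows it vanishes. Hence $w\ge\varphi\ge\e^{-D_0\sqrt\lambda}$, and therefore $W_a(T)\ge\e^{-D_0\sqrt\lambda}$. All constants are uniform in $a$ and $T\ge T_*$, because only $\kappa$, $M$, $\osc x_1$ and the two hypotheses on $\lambda$ were used.

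The only delicate step is the justification of the two order relations. The first is the positivity of $A_a^{-1}$, which follows from positivity of the semigroup via the Laplace transform. The second is the weak comparison between $w$ and the smooth barrier $\varphi$. Both are standard, so I expect the barrier computation itself to be the main point: the choice $\mu=d_0\sqrt\lambda$ is exactly what lets $\kappa\mu^2$ absorb both $\lambda$ and the drift term $\mu M$.
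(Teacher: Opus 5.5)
Your proof is correct and follows the paper's argument. Your $w=1-\lambda A_a^{-1}1$ is the paper's stationary solution $V_a$; you use the same exponential barrier in $x_1$ with rate $d_0\sqrt\lambda$ (reflected, which is immaterial) and the same absorption of $\e^{-\lambda T}$ via $\lambda T_*-D_0\sqrt\lambda\ge\log2$. The justifications differ only slightly and are valid: you obtain $W_a(T)\ge w$ from positivity of $A_a^{-1}=\int_0^\infty\e^{-rA_a}\,dr$ rather than from $0\le V_a\le1$ and $W_a(T)=V_a+\e^{-TA_a}(1-V_a)$, and you use a weak comparison with $(\varphi-w)^+\in H_0^1(\Om)$ in place of the classical one.
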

	
	\begin{proof}
		Let $V_a\in C^{2,\alpha}(\overline\Om)$ solve
		\[
		\begin{aligned}
			\begin{cases}
				-\Div(a\nabla V_a)+\lambda V_a=0 &\text{in }\Om,\\
				V_a=1 &\text{on }\partial\Om.
			\end{cases}
		\end{aligned}
		\]
		Existence follows from \cite[Theorem~6.14]{GilbargTrudinger2001}, applied to $a\Delta+\nabla a\cdot\nabla-\lambda$. Comparison with the constant functions $0$ and $1$ gives $0\le V_a\le1$; see \cite[Theorem~3.3]{GilbargTrudinger2001}. Since $1-V_a$ has zero boundary values and satisfies $A_a(1-V_a)=\lambda$, one has $V_a=1-\lambda A_a^{-1}1$. Therefore, formulas \eqref{eq:normalization-intro} and \eqref{eq:mild} give
		\begin{equation}\label{eq:stationary-transient}
			\begin{aligned}
				W_a(T)=V_a+\e^{-TA_a}(1-V_a)\ge V_a,
			\end{aligned}
		\end{equation}
		where the inequality follows from the positivity of the semigroup.
		
		To obtain a lower bound for $V_a$, define the barrier function $\phi_\lambda(x)=\exp(d_0\sqrt\lambda(x_1-\max_{\overline\Om}x_1))$, then $\phi_\lambda\le1$ on $\partial\Om$ and
		\[
		\begin{aligned}
			-\Div(a\nabla\phi_\lambda)+\lambda\phi_\lambda=(\lambda-ad_0^2\lambda-d_0\sqrt\lambda\,\partial_1a)\phi_\lambda\le(-\lambda+d_0M\sqrt\lambda)\phi_\lambda\le0.
		\end{aligned}
		\]
		The comparison principle gives
		\begin{equation}\label{eq:barrier}
			\begin{aligned}
				V_a\ge\phi_\lambda\ge\e^{-D_0\sqrt\lambda}\quad\text{on }\overline\Om.
			\end{aligned}
		\end{equation}
		This construction uses only the range of $x_1$ on $\overline\Om$.
		
		By \eqref{eq:lp-formula}, \eqref{eq:stationary-transient}, \eqref{eq:barrier}, and $\e^{-TL_a}1\le1$, we obtain
		\[
		\begin{aligned}
			L_aP_a(T)=\lambda W_a(T)-\lambda\e^{-TA_a}1\ge\lambda(\e^{-D_0\sqrt\lambda}-\e^{-\lambda T}).
		\end{aligned}
		\]
		The condition $\lambda T_*-D_0\sqrt\lambda\ge\log2$ implies $\e^{-\lambda T}\le\tfrac12\e^{-D_0\sqrt\lambda}$ for $T\ge T_*$, proving \eqref{eq:positive-lp}.
	\end{proof}

	\subsection{Perturbation of the smoothing operator}\label{sec:multiplier}
	
	The next lemma estimates the difference of $S_a^\lambda(T)$ in terms of the $L^2$ difference of the coefficients. We include $\lambda=0$, which will be used for time-independent inputs in Section~\ref{sec:local}.
	
	\begin{lemma}\label{lem:multiplier}
		For every $T_*>0$, there is a constant $B_*>0$, depending only on $\Om,n,\alpha,\kappa,M,T_*$, such that
		\begin{equation}\label{eq:multiplier-bound}
			\begin{aligned}
				\norm{(S_{a_1}^\lambda(T)-S_{a_2}^\lambda(T))z}_2\le B_*(1+\lambda)\e^{-\lambda T}\norm{a_1-a_2}_2\norm{z}_\infty
			\end{aligned}
		\end{equation}
		for all $\lambda\ge0$, $T\ge T_*$, $a_1,a_2\in\mathcal A_{\kappa,M}$, and $z\in L^\infty(\Om;\C)$, where $S_a^\lambda(T)=F_T(L_a+\lambda I)$. The coefficients may have different boundary values.
	\end{lemma}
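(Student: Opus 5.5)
We represent $S_a^\lambda(T)=F_T(L_a+\lambda I)$ by the Cauchy integral
\[
S_a^\lambda(T)=\frac1{2\pi\ii}\int_\Gamma F_T(\zeta+\lambda)(\zeta-L_a)^{-1}\,d\zeta,
\]
where $\Gamma$ is a contour common to the whole class. The function $F_T(s)=s/(\e^{Ts}-1)$ is holomorphic away from the poles $s=2\pi\ii k/T$, $k\ne0$. The point $s=0$ is removable. We choose $\Gamma$ enclosing $[\rho_*,\infty)\supset\sigma(L_a)$, for instance the boundary of the region $\{\operatorname{Re}\zeta>\rho_*/2,\ |\operatorname{Im}\zeta|<\pi/(2T)\}$, traversed appropriately. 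Since $\lambda\ge0$, the shift $\zeta+\lambda$ stays in the strip $|\operatorname{Im}s|<\pi/(2T)\le\pi/(2T_*)$, away from the poles. Along the horizontal rays we have $|\e^{T(\zeta+\lambda)}-1|\gtrsim \e^{T(\operatorname{Re}\zeta+\lambda)}$. Hence
\[
|F_T(\zeta+\lambda)|\lesssim (1+\lambda+|\zeta|)\,\e^{-T\lambda}\e^{-T\operatorname{Re}\zeta},
\]
and this is integrable along $\Gamma$. This is exactly the origin of the factor $(1+\lambda)\e^{-\lambda T}$, with constants uniform in $T\ge T_*$; for large $T$ one may shrink the strip height to $\pi/(2T_*)$ fixed. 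Since $\rho_*$ depends only on $\Om,\kappa$, the resolvents are uniformly bounded on $\Gamma$ for all $a\in\mathcal A_{\kappa,M}$.

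Next comes the second resolvent identity in form sense:
\[
(\zeta-L_{a_1})^{-1}-(\zeta-L_{a_2})^{-1}=(\zeta-L_{a_1})^{-1}(L_{a_1}-L_{a_2})(\zeta-L_{a_2})^{-1}.
\]
Here $L_{a_1}-L_{a_2}=-\Div((a_1-a_2)\nabla\,\cdot)$ is understood as an operator from $H_0^1$ to $H^{-1}$, so that the coefficient difference stays inside the divergence. This is valid even if the traces of $a_1,a_2$ differ, since both operators share the form domain $H_0^1$. Pairing with a test function $\psi\in L^2$ gives
\[
\ip{(R_1-R_2)z}{\psi}=-\int_\Om (a_1-a_2)\nabla R_2(\zeta)z\cdot\nabla\overline{R_1(\zeta)^*\psi}\,dx,
\]
which is bounded by $\norm{a_1-a_2}_2\norm{\nabla R_2(\zeta)z}_\infty\norm{\nabla R_1(\bar\zeta)\psi}_2$. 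The last factor is $\lesssim\norm\psi_2$ uniformly on $\Gamma$ by the energy estimate, with growth at most like $(1+|\zeta|)^{1/2}$ divided by the distance to the spectrum, which is harmless.

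The main obstacle is the uniform $L^\infty$ bound on $\nabla(\zeta-L_{a_2})^{-1}z$ for $z\in L^\infty$, with at most polynomial growth in $|\zeta|$ along $\Gamma$. The plan is to write $w=R_2(\zeta)z$, so that $L_{a_2}w=\zeta w-z$. The $L^p$ resolvent bounds come from the holomorphic $L^p$ semigroup in \S\ref{sec:uniform}: the sector contains $\Gamma$ up to a compact set, and on that compact set we use spectral distance together with compactness and uniformity. These give $\norm{w}_{2n}\lesssim\norm z_{2n}$. Then \eqref{eq:w2p} with $p=2n$ gives $\norm{w}_{W^{2,2n}}\lesssim(1+|\zeta|)\norm z_\infty$, and Sobolev embedding gives $\norm{\nabla w}_\infty\lesssim(1+|\zeta|)\norm z_\infty$. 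If the sectorial picture is delicate near the horizontal rays, an alternative is to factor one smoothing operator out. We would write $F_T(A)=\e^{-TA/2}G(A)$, where $G(s)=s/(\e^{Ts/2}-\e^{-Ts/2})$ is bounded, and handle the $L^\infty\to W^{1,\infty}$ step through \eqref{eq:heat-smoothing}-type positive-time estimates and the Cauchy formula for $L_a\e^{-tL_a}$. The polynomial growth is absorbed by the exponential decay $\e^{-T\operatorname{Re}\zeta}$ on the rays. Integrating over $\Gamma$ and taking the supremum over $\norm\psi_2\le1$ then yields \eqref{eq:multiplier-bound}, with $B_*$ depending only on $\Om,n,\alpha,\kappa,M,T_*$.
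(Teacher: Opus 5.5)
Your architecture matches the paper's. You use a coefficient-independent contour in $\operatorname{Re}\zeta>0$ and the resolvent identity with $b=a_1-a_2$ kept inside $-\Div(b\nabla\,\cdot)$. You bound $\nabla R_{a_1}(\overline\zeta)\psi$ in $L^2$ by the energy estimate, and the decay $|F_T(\zeta+\lambda)|\lesssim(\lambda+|\zeta|)\e^{-\lambda T}\e^{-T_*\operatorname{Re}\zeta}$ produces $(1+\lambda)\e^{-\lambda T}$.

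\textbf{The gap.} It lies in the step you call the main obstacle: bounding $\norm{\nabla R_{a_2}(\zeta)z}_\infty$ along the unbounded part of $\Gamma$. With $\Gamma$ the boundary of a horizontal strip, the claim that ``the sector contains $\Gamma$ up to a compact set'' is false. Let $\omega_{2n}$ be the holomorphy angle of the $L^{2n}$ semigroup. Sectoriality then bounds the resolvent only for $\zeta$ outside the closed sector $\{|\arg\zeta|\le\pi/2-\omega_{2n}\}$ around the positive axis. The rays $\zeta=x\pm\ii h$, $x\to\infty$, have $\arg\zeta\to0$, so they eventually lie inside every such sector. Since $L_a$ is not normal on $L^{2n}$, the distance to the spectrum gives no resolvent bound there either. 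This non-compact part is exactly where the estimate is needed. The fallback through $F_T(A)=\e^{-TA/2}G(A)$ does not close the gap, because you do not say how this factorization would interact with the resolvent identity for $S^\lambda_{a_1}(T)-S^\lambda_{a_2}(T)$.

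\textbf{Repairs.} Either of two fixes works.

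First, you can replace the strip by rays $\arg\zeta=\pm\theta$ with $\pi/2-\omega_{2n}<\theta<\pi/2$, plus a small inner arc. Then the sectorial bound $\norm{R_a(\zeta)}_{2n\to2n}\lesssim|\zeta|^{-1}$ is uniform in $a$. Estimate \eqref{eq:w2p} gives a uniform $W^{2,2n}\subset W^{1,\infty}$ bound, and $\operatorname{Re}\zeta=r\cos\theta$ still makes $F_T$ decay. You must still check consistency of the $L^2$ and $L^{2n}$ resolvents on $L^\infty$ and treat the inner arc separately.

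Second, as the paper does, you can use only the $L^2$ bound $\norm{R_a(\zeta)}_{2\to2}=\operatorname{dist}(\zeta,\sigma(L_a))^{-1}$ from self-adjointness. Combine it with the $W^{2,2}$ estimate and a finite bootstrap $W^{2,p_\ell}\hookrightarrow W^{1,p_{\ell+1}}$, $p_\ell=2n/(n-\ell)$, applying \eqref{eq:w2p} to $L_av=\zeta v-z$ at each step. This gives polynomial growth in $|\zeta|$ on any contour at positive distance from $[\rho_*,\infty)$, including your strip. The growth is then absorbed by $\e^{-T_*\operatorname{Re}\zeta}$.

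\textbf{A smaller point.} The strip height has nothing to do with avoiding poles. The poles $2\pi\ii k/T$ of $F_T$ lie on the imaginary axis, while $\operatorname{Re}(\zeta+\lambda)\ge\rho_*/2$ on and inside $\Gamma$. Fixing the height at $\pi/(2T_*)$ ``for large $T$'' is an enlargement, not a shrinking. With height $\pi/(2T)$, the $L^2$ resolvent bound on the rays is $2T/\pi$, which is not uniform in $T$ as you assert. This is harmless against the factor $\e^{-T\rho_*/2}$, but it has to be tracked.
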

	
	\begin{proof}
		Fix $\theta=\pi/4$ and $r_0=\rho_*/2$, where $\rho_*$ is the lower bound in \eqref{eq:spectral-lower}, and write $R_a(\zeta)=(\zeta-L_a)^{-1}$. Since $\sigma(L_a)\subset[\rho_*,\infty)$, we estimate the distance $|\zeta-s|$ for real $s\ge\rho_*$. On the rays $\zeta=r\e^{\pm\ii\theta}$, $r\ge r_0$, one has
		\begin{equation}\label{eq:resolvent-geometry}
			\begin{aligned}
				|\zeta-s|\ge\sin(\theta/2)(r+s),\quad s\ge\rho_*.
			\end{aligned}
		\end{equation}
		Indeed,
		\[
		\begin{aligned}
			|r\e^{\pm\ii\theta}-s|^2=(r-s)^2+4rs\sin^2(\theta/2)\ge\sin^2(\theta/2)(r+s)^2.
		\end{aligned}
		\]
		Thus, the spectral theorem gives
		\begin{equation}\label{eq:resolvent-estimate}
			\begin{aligned}
				\norm{R_a(\zeta)}_{2\to2}
				=\sup_{s\in\sigma(L_a)}\frac1{|\zeta-s|}
				\le\frac1{\sin(\theta/2)(r+\rho_*)}
				\le\frac{C_\theta}{r},
			\end{aligned}
		\end{equation}
		where $C_\theta=1/\sin(\theta/2)$.
		
		Fix $a\in\mathcal A_{\kappa,M}$ and $z\in L^\infty(\Om)$, and let $\zeta=r\e^{\pm\ii\theta}$ with $r\ge r_0$. Set $v=R_a(\zeta)z$, then $v\in\operatorname{Dom}(L_a)$ and $L_av=\zeta v-z$. The bound \eqref{eq:resolvent-estimate} gives $\norm{v}_2\le(C_\theta/r)\norm{z}_2$. Applying \eqref{eq:w2p} with $p=2$, we obtain
		\[
		\begin{aligned}
			\norm{v}_{W^{2,2}}&\le C(\norm{L_av}_2+\norm{v}_2)\le C((1+r)\norm{v}_2+\norm{z}_2)\\
			&\le C\Bigl(1+C_\theta\frac{1+r}{r}\Bigr)\norm{z}_2\le C(1+C_\theta(1+r_0^{-1}))|\Om|^{1/2}\norm{z}_\infty,
		\end{aligned}
		\]
		where we used $(1+r)/r\le1+r_0^{-1}$ and $\norm{z}_2\le|\Om|^{1/2}\norm{z}_\infty$. Since $\theta$ and $r_0$ are fixed, absorbing these factors into $C$ gives $\norm{v}_{W^{2,2}}\le C\norm{z}_\infty$, with $C$ independent of $a$, $r$, and $z$.
		
		Set $m=n-1$ and $p_\ell=2n/(n-\ell)$ for $0\le\ell\le m$. Thus, $p_0=2$ and $p_m=2n$. For $0\le\ell<m$, the identity $1/p_\ell-1/p_{\ell+1}=1/(2n)$ gives the Sobolev embedding $W^{2,p_\ell}(\Om)\hookrightarrow W^{1,p_{\ell+1}}(\Om)$. Suppose that $\norm{v}_{W^{2,p_\ell}}\le C(1+r)^\ell\norm{z}_\infty$. This embedding implies $v\in L^{p_{\ell+1}}(\Om)$, and hence $\zeta v-z\in L^{p_{\ell+1}}(\Om)$. The weak regularity assertion in Lemma~\ref{lem:elliptic}, applied to $L_av=\zeta v-z$, gives $v\in W^{2,p_{\ell+1}}(\Om)$. Estimate \eqref{eq:w2p} then yields
		\[
		\begin{aligned}
			\norm{v}_{W^{2,p_{\ell+1}}}\le C((1+r)\norm{v}_{p_{\ell+1}}+\norm{z}_{p_{\ell+1}})\le C(1+r)^{\ell+1}\norm{z}_\infty.
		\end{aligned}
		\]
		Starting from the $W^{2,2}$ estimate above, induction gives $\norm{v}_{W^{2,2n}}\le C(1+r)^m\norm{z}_\infty$, and the Sobolev embedding $W^{2,2n}(\Om)\hookrightarrow W^{1,\infty}(\Om)$ gives
		\begin{equation}\label{eq:resolvent-gradient}
			\begin{aligned}
				\norm{\nabla R_a(r\e^{\pm\ii\theta})z}_\infty\le C(1+r)^m\norm{z}_\infty.
			\end{aligned}
		\end{equation}
		
		On the inner arc $\zeta=r_0\e^{\ii\varphi}$, $|\varphi|\le\theta$, the distance from $\zeta$ to $\sigma(L_a)$ is at least $\rho_*-r_0>0$. The spectral theorem gives $\norm{R_a(\zeta)}_{2\to2}\le(\rho_*-r_0)^{-1}$. Repeating the preceding estimates with $|\zeta|=r_0$ gives
		\[
		\begin{aligned}
			\norm{\nabla R_a(r_0\e^{\ii\varphi})z}_\infty\le C\norm{z}_\infty,\quad |\varphi|\le\theta.
		\end{aligned}
		\]
		The constants in these estimates depend only on $\Om,n,\alpha,\kappa,M$, since $\theta$ is fixed and $r_0=\rho_*/2$ depends only on $\Om$ and $\kappa$. They are independent of $a\in\mathcal A_{\kappa,M}$, $z\in L^\infty(\Om)$, and the contour parameters $r\ge r_0$ and $|\varphi|\le\theta$.
		
		For $G\in L^2(\Om;\C^n)$, define $R_a(\zeta)(-\Div G)\in L^2(\Om)$ by
		\[
		\begin{aligned}
			\int_\Om R_a(\zeta)(-\Div G)\,\overline h\,dx=\int_\Om G\cdot\nabla\overline{R_a(\overline\zeta)h}\,dx,\quad h\in L^2(\Om).
		\end{aligned}
		\]
		For $w\in\operatorname{Dom}(L_a)$, ellipticity and integration by parts give
		\[
		\begin{aligned}
			\kappa\norm{\nabla w}_2^2\le\int_\Om a|\nabla w|^2\,dx=\ip{L_aw}{w}=\norm{L_a^{1/2}w}_2^2.
		\end{aligned}
		\]
		To apply this estimate with $w=R_a(\overline\zeta)h$, we estimate $L_a^{1/2}R_a(\overline\zeta)$ by the functional calculus for the positive self-adjoint operator $L_a$. This operator corresponds to the function $s\mapsto\sqrt{s}/(\overline\zeta-s)$. The same spectral theorem used in \eqref{eq:resolvent-estimate} yields 
		\[
		\begin{aligned}
			\norm{L_a^{1/2}R_a(\overline\zeta)}_{2\to2}=\sup_{s\in\sigma(L_a)}\frac{\sqrt{s}}{|\overline\zeta-s|}\le\sup_{s\ge\rho_*}\frac{\sqrt{s}}{|\zeta-s|},
		\end{aligned}
		\]
		where we used $\sigma(L_a)\subset[\rho_*,\infty)$ and $|\overline\zeta-s|=|\zeta-s|$ for real $s$. Consequently, for every $h\in L^2(\Om)$,
		\[
		\begin{aligned}
			\norm{\nabla R_a(\overline\zeta)h}_2\le\kappa^{-1/2}\norm{L_a^{1/2}R_a(\overline\zeta)h}_2\le\kappa^{-1/2}\sup_{s\ge\rho_*}\frac{\sqrt{s}}{|\zeta-s|}\norm{h}_2.
		\end{aligned}
		\]
		
		This estimate and Cauchy--Schwarz show that the right-hand side of the defining duality identity is a bounded conjugate-linear functional of $h$. Thus, the Riesz representation theorem justifies the definition of $R_a(\zeta)(-\Div G)$ as an $L^2$ function. Taking the supremum over $\norm{h}_2=1$ gives
		\begin{equation}\label{eq:weak-resolvent}
			\begin{aligned}
				\norm{R_a(\zeta)(-\Div G)}_2\le\kappa^{-1/2}\sup_{s\ge\rho_*}\frac{\sqrt{s}}{|\zeta-s|}\norm{G}_2.
			\end{aligned}
		\end{equation}
		On the rays, \eqref{eq:resolvent-geometry} gives
		\[
		\begin{aligned}
			\sup_{s\ge\rho_*}\frac{\sqrt{s}}{|\zeta-s|}\le C_\theta\sup_{s>0}\frac{\sqrt{s}}{r+s}=\frac{C_\theta}{2\sqrt{r}}.
		\end{aligned}
		\]
		Thus, \eqref{eq:weak-resolvent} yields $\norm{R_a(\zeta)(-\Div G)}_2\le C_\theta(2\sqrt{\kappa})^{-1}r^{-1/2}\norm{G}_2$ on the rays. On the inner arc, $|\zeta-s|\ge s-r_0\ge s/2$ for $s\ge\rho_*=2r_0$, so the same estimate gives $\norm{R_a(\zeta)(-\Div G)}_2\le2(\kappa\rho_*)^{-1/2}\norm{G}_2$. If $-\Div G\in L^2(\Om)$, the identity $R_a(\zeta)^*=R_a(\overline\zeta)$ and integration by parts show that the duality definition agrees with the usual resolvent.
		
		Set $b=a_1-a_2$. Since the operators have the common domain \eqref{eq:domain}, the resolvent identity gives
		\begin{equation}\label{eq:resolvent-identity}
			\begin{aligned}
				R_{a_1}(\zeta)-R_{a_2}(\zeta)=R_{a_1}(\zeta)[-\Div(b\nabla)]R_{a_2}(\zeta).
			\end{aligned}
		\end{equation}
		For the estimate, apply \eqref{eq:weak-resolvent} with $G=b\nabla R_{a_2}(\zeta)z$. Its $L^2$ norm is at most $\norm{b}_2\norm{\nabla R_{a_2}(\zeta)z}_\infty$. Together with \eqref{eq:resolvent-gradient}, this proves
		\begin{equation}\label{eq:resolvent-difference}
			\begin{aligned}
				\norm{(R_{a_1}(\zeta)-R_{a_2}(\zeta))z}_2\le Cr^{-1/2}(1+r)^m\norm{b}_2\norm{z}_\infty
			\end{aligned}
		\end{equation}
		on the rays. On the inner arc, the bound is $C\norm{b}_2\norm{z}_\infty$. In particular, these estimates involve the $L^2$ norm of $b$ without its derivatives.
		
		Let $\Gamma$ consist of the lower ray directed outwards, the upper ray directed inwards, and the inner arc directed clockwise from angle $\theta$ to $-\theta$. Adding an outer arc from $-\theta$ to $\theta$ gives the positively oriented boundary of the truncated sector outside the disk of radius $r_0$. Set $c_0=r_0\cos\theta>0$. The function $F_T$ extends meromorphically to $\C$, with poles at $2\pi\ii j/T$, $j\in\mathbb Z\setminus\{0\}$, and a removable singularity at zero. Hence, $F_T(\lambda+\zeta)$ is holomorphic in the sector enclosed by $\Gamma$ for every $\lambda\ge0$. For $\zeta\in\Gamma$ and $T\ge T_*$,
		\begin{equation}\label{eq:shift-decay}
			\begin{aligned}
				|F_T(\lambda+\zeta)|\le\frac{(\lambda+|\zeta|)\e^{-\lambda T}\e^{-T_*\operatorname{Re}\zeta}}{1-\e^{-T_*c_0}}.
			\end{aligned}
		\end{equation}
		Indeed, $|\e^{T(\lambda+\zeta)}-1|\ge\e^{T(\lambda+\operatorname{Re}\zeta)}-1$, and $\operatorname{Re}\zeta\ge c_0$.
		
		We use the representation
		\begin{equation}\label{eq:functional-calculus}
			\begin{aligned}
				S_a^\lambda(T)=\frac1{2\pi\ii}\int_\Gamma F_T(\lambda+\zeta)R_a(\zeta)\,d\zeta.
			\end{aligned}
		\end{equation}
		The resolvent bounds and \eqref{eq:shift-decay} make the integral converge in operator norm. For each fixed $s\ge\rho_*$, close the contour by an outer arc of radius $R>2s$. On this arc, $\operatorname{Re}\zeta\ge R\cos\theta$ and $|\zeta-s|\ge R/2$, so the integral of $F_T(\lambda+\zeta)/(\zeta-s)$ over the arc tends to zero as $R\to\infty$. Cauchy's formula gives $F_T(\lambda+s)$ for the scalar integral over $\Gamma$. The integrable operator-norm bound permits interchange with the spectral integral and proves \eqref{eq:functional-calculus}. The same contour is used for every coefficient in the class.
		
		Subtracting the representations \eqref{eq:functional-calculus} and applying \eqref{eq:resolvent-difference} and \eqref{eq:shift-decay}, we bound the contribution of the two rays by a constant multiple of
		\[
		\begin{aligned}
			\e^{-\lambda T}\norm{b}_2\norm{z}_\infty\int_{r_0}^{\infty}(\lambda+r)r^{-1/2}(1+r)^m\e^{-T_*r\cos\theta}\,dr.
		\end{aligned}
		\]
		The integral is bounded by $C_*(1+\lambda)$, with $C_*$ independent of $\lambda$ and $T$. The inner arc has fixed length, and its contribution is also bounded by $C_*(1+\lambda)\e^{-\lambda T}\norm{b}_2\norm{z}_\infty$. This proves \eqref{eq:multiplier-bound}.
	\end{proof}
	
	\subsection{Pairwise comparison and choice of the input}\label{sec:global-proof}
	
	The following coefficient estimate follows from integration by parts.
	
	\begin{lemma}\label{lem:ratio}
		Let $a\in\mathcal A_{\kappa,M}$ and $q\in C^{2,\alpha}(\overline\Om)$ be real-valued, with $-\Div(a\nabla q)\ge\nu>0$ in $\Om$. For every real-valued $b\in C^1(\overline\Om)$ with zero boundary trace,
		\begin{equation}\label{eq:ratio-bound}
			\begin{aligned}
				\norm{b}_2\le\frac{2}{\kappa\nu}\norm{\Div(b\nabla q)}_2.
			\end{aligned}
		\end{equation}
		The function $q$ may have nonzero boundary values and critical points.
	\end{lemma}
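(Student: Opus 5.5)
The plan is to test the equation with the ratio $w=b/a$, as described in the introduction, and to rewrite $\Div(b\nabla q)$ so that the positive quantity $g:=-\Div(a\nabla q)\ge\nu$ appears with a definite sign. Set $f=\Div(b\nabla q)$. Since $a\in C^{1,\alpha}$, $a\ge\kappa$, $b\in C^1(\overline\Om)$ and $q\in C^{2,\alpha}(\overline\Om)$, the function $w=b/a$ belongs to $C^1(\overline\Om)$ and has zero boundary trace. All the products below are $C^1$ or continuous on $\overline\Om$, so classical integration by parts is available. We may assume $f\in L^2$; this is automatic, since $f$ is continuous.

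\textbf{Step 1 (algebraic rewriting).} Write $b\nabla q=w\,(a\nabla q)$. The product rule gives
\[
f=\Div(w\,a\nabla q)=a\nabla w\cdot\nabla q+w\Div(a\nabla q)=a\nabla w\cdot\nabla q-wg.
\]

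\textbf{Step 2 (testing with $w$).} Multiply by $w$ and integrate over $\Om$:
\[
\int_\Om fw\,dx=\tfrac12\int_\Om a\nabla(w^2)\cdot\nabla q\,dx-\int_\Om w^2g\,dx.
\]
In the first term, integrate by parts. The boundary term $\int_{\partial\Om}w^2a\partial_\nu q\,dS$ vanishes because $w=0$ on $\partial\Om$. This step does not need any boundary condition on $q$, so $q$ may have nonzero boundary values. The first term therefore equals $-\tfrac12\int_\Om w^2\Div(a\nabla q)\,dx=\tfrac12\int_\Om w^2g\,dx$. Hence
\[
\int_\Om fw\,dx=-\tfrac12\int_\Om w^2g\,dx.
\]
No pointwise information on $\nabla q$ enters, so critical points of $q$ cause no difficulty. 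Likewise, $b$ is allowed to change sign.

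\textbf{Step 3 (conclusion).} Use $g\ge\nu$ and Cauchy--Schwarz:
\[
\tfrac\nu2\norm{w}_2^2\le\tfrac12\int_\Om w^2g\,dx=\Bigl|\int_\Om fw\,dx\Bigr|\le\norm{f}_2\norm{w}_2.
\]
This gives $\norm{w}_2\le 2\nu^{-1}\norm{f}_2$. Since $|b|=a|w|\le\kappa^{-1}|w|$, we obtain $\norm{b}_2\le 2(\kappa\nu)^{-1}\norm{\Div(b\nabla q)}_2$, which is \eqref{eq:ratio-bound}.

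The only point requiring care is the sign bookkeeping in Step 2. The cross term $a w\nabla w\cdot\nabla q$, after integration by parts, contributes exactly half of $\int_\Om w^2 g\,dx$ with the opposite sign, leaving a definite multiple of $\int_\Om w^2g\,dx$. This is why dividing by $a$, rather than testing with $b$ itself, is the right choice: testing with $b$ would leave an uncontrolled term involving $\nabla a\cdot\nabla q$.
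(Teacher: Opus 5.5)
Your proof is correct and follows essentially the same route as the paper: test with $w=b/a$ and use the zero trace of $b$ to obtain $-\int_\Om w\,\Div(b\nabla q)\,dx=\tfrac12\int_\Om[-\Div(a\nabla q)]w^2\,dx\ge\tfrac\nu2\norm{w}_2^2$, then apply Cauchy--Schwarz and $a\le\kappa^{-1}$. The only difference is cosmetic. You expand $\Div(w\,a\nabla q)$ by the product rule and integrate by parts once, whereas the paper integrates by parts twice without expanding.
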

	
	\begin{proof}
		Put $r=b/a$ and $D=\Div(b\nabla q)=\Div(ar\nabla q)$. Since $a\ge\kappa$ and $b$ has zero boundary trace, one has $r\in C^1(\overline\Om)\cap H_0^1(\Om)$. Integration by parts gives
		\begin{equation}\label{eq:ratio-identity}
			\begin{aligned}
				-\int_\Om rD\,dx=\int_\Om ar\nabla q\cdot\nabla r\,dx=\frac12\int_\Om a\nabla q\cdot\nabla(r^2)\,dx=\frac12\int_\Om[-\Div(a\nabla q)]r^2\,dx
				\ge\frac\nu2\norm{r}_2^2.
			\end{aligned}
		\end{equation}
		Both boundary terms vanish because $r$ has zero boundary trace. The Cauchy--Schwarz inequality implies 
		\[
		\begin{aligned}
			\frac\nu2\norm{r}_2^2\le-\int_\Om rD\,dx\le\norm{r}_2\norm{D}_2.
		\end{aligned}
		\]
		If $r=0$, then $b=0$ and the conclusion is immediate. Otherwise, division by $\norm{r}_2$ and the bound $a\le\kappa^{-1}$ yield $\norm{b}_2\le\kappa^{-1}\norm{r}_2\le\frac{2}{\kappa\nu}\norm{D}_2$, which proves \eqref{eq:ratio-bound}.
	\end{proof}
	
	\begin{proof}[Proof of Theorem~\ref{thm:global}]
		Fix $T_*>0$ and increase $B_*$ in Lemma~\ref{lem:multiplier} to at least $1$. Choose $\lambda$ so that
		\begin{equation}\label{eq:lambda-choice}
			\begin{aligned}
				\lambda\ge\max\{1,d_0^2M^2\},\quad\lambda T_*-D_0\sqrt\lambda\ge H_0,\quad H_0=\max\{\log2,\log(16B_*/\kappa)\},
			\end{aligned}
		\end{equation}
		where $d_0,D_0$ are defined in \eqref{eq:barrier-constants}. For example, take $\lambda=x^2$ with
		\begin{equation}\label{eq:explicit-rate}
			\begin{aligned}
				x\ge\max\Big\{1,d_0M,\frac{D_0+\sqrt{D_0^2+4T_*H_0}}{2T_*}\Big\}.
			\end{aligned}
		\end{equation}
		This choice depends only on the prescribed parameters.
		
		Take $a_1,a_2\in\mathcal A_{\kappa,M}$ with the same boundary values and fix $T\ge T_*$. For simplicity, let us write $u_j=u_{a_j}$, $P_j=P_{a_j}(T)$, $S_j=S_{a_j}^\lambda(T)$, $b=a_1-a_2$, and $d=P_1-P_2$. Lemma~\ref{lem:final-identities} gives $(A_{a_j}+S_j)P_j=\lambda$ for $j=1,2$. Since $(A_{a_1}-A_{a_2})P_2=-\Div(b\nabla P_2)$, subtraction yields
		\[
		\begin{aligned}
			0=(A_{a_1}+S_1)P_1-(A_{a_2}+S_2)P_2=(A_{a_1}+S_1)d-\Div(b\nabla P_2)+(S_1-S_2)P_2.
		\end{aligned}
		\]
		Rearranging yields
		\begin{equation}\label{eq:difference}
			\begin{aligned}
				\Div(b\nabla P_2)=(S_1-S_2)P_2+(A_{a_1}+S_1)d.
			\end{aligned}
		\end{equation}
		The functions $P_1,P_2,d$ belong to the common domain $H^2(\Om)\cap H_0^1(\Om)$ of the Dirichlet operators, and all terms in \eqref{eq:difference} belong to $L^2(\Om)$.
		
		The assumption that $a_1$ and $a_2$ have the same boundary values gives $b|_{\partial\Om}=0$. The choice \eqref{eq:lambda-choice} satisfies the hypotheses of Lemma~\ref{lem:positivity}, so \eqref{eq:positive-lp} gives $L_{a_2}P_2\ge\nu_\lambda=\frac{\lambda}{2}\e^{-D_0\sqrt\lambda}$. Applying Lemma~\ref{lem:ratio} with $a=a_2$, $q=P_2$, and $\nu=\nu_\lambda$, we obtain
		\begin{equation}\label{eq:global-coercivity}
			\begin{aligned}
				\norm{b}_2\le\frac{2}{\kappa\nu_\lambda}\norm{\Div(b\nabla P_2)}_2=\frac{4}{\kappa\lambda}\e^{D_0\sqrt\lambda}\norm{\Div(b\nabla P_2)}_2.
			\end{aligned}
		\end{equation}
		
		We estimate the two terms on the right-hand side of \eqref{eq:difference} separately. For the first term, \eqref{eq:multiplier-bound} with $z=P_2$ and the bound $\norm{P_2}_\infty\le1$ give
		\[
		\begin{aligned}
			\norm{(S_1-S_2)P_2}_2\le B_*(1+\lambda)\e^{-\lambda T}\norm{b}_2\norm{P_2}_\infty\le B_*(1+\lambda)\e^{-\lambda T}\norm{b}_2.
		\end{aligned}
		\]
		Substituting \eqref{eq:difference} into \eqref{eq:global-coercivity} and using this bound, we obtain
		\[
		\begin{aligned}
			\norm{b}_2
			&\le\frac{4}{\kappa\lambda}\e^{D_0\sqrt\lambda}\bigl(\norm{(S_1-S_2)P_2}_2+\norm{(A_{a_1}+S_1)d}_2\bigr)\\
			&\le\frac{4B_*}{\kappa}\frac{1+\lambda}{\lambda}\e^{D_0\sqrt\lambda-\lambda T}\norm{b}_2+\frac{4}{\kappa\lambda}\e^{D_0\sqrt\lambda}\norm{(A_{a_1}+S_1)d}_2.
		\end{aligned}
		\]
		By \eqref{eq:lambda-choice}, we have $\lambda\ge1$ and $\lambda T_*-D_0\sqrt\lambda\ge H_0$. Since $T\ge T_*$, it follows that
		\[
		\begin{aligned}
			\frac{4B_*}{\kappa}\frac{1+\lambda}{\lambda}\e^{D_0\sqrt\lambda-\lambda T}\le\frac{8B_*}{\kappa}\e^{D_0\sqrt\lambda-\lambda T_*}\le\frac{8B_*}{\kappa}\e^{-H_0}\le\frac12,
		\end{aligned}
		\]
		where the last inequality uses $H_0\ge\log(16B_*/\kappa)$. Therefore,
		\[
		\begin{aligned}
			\norm{b}_2\le\frac12\norm{b}_2+\frac{4}{\kappa\lambda}\e^{D_0\sqrt\lambda}\norm{(A_{a_1}+S_1)d}_2.
		\end{aligned}
		\]
		Subtracting $\frac12\norm{b}_2$ from both sides and multiplying by $2$ gives
		\begin{equation}\label{eq:absorbed}
			\begin{aligned}
				\norm{b}_2\le\frac{8}{\kappa\lambda}\e^{D_0\sqrt\lambda}\norm{(A_{a_1}+S_1)d}_2.
			\end{aligned}
		\end{equation}
		
		It remains to estimate $(A_{a_1}+S_1)d$ in terms of the measured final states. For every $a\in\mathcal A_{\kappa,M}$ and $v\in\operatorname{Dom}(L_a)$, the identity $L_av=-a\Delta v-\nabla a\cdot\nabla v$ gives
		\[
		\begin{aligned}
			\norm{L_av}_2\le\kappa^{-1}\norm{\Delta v}_2+M\norm{\nabla v}_2\le C_L\norm{v}_{H^2},
		\end{aligned}
		\]
		where $C_L$ depends only on $n,\kappa,M$. Also, \eqref{eq:multiplier-norm} gives $\norm{S_1}_{2\to2}\le T^{-1}\le T_*^{-1}$. Since $A_{a_1}=L_{a_1}+\lambda I$ and $d\in\operatorname{Dom}(L_{a_1})$, these estimates imply
		\[
		\begin{aligned}
			\norm{(A_{a_1}+S_1)d}_2\le\norm{L_{a_1}d}_2+\lambda\norm{d}_2+\norm{S_1d}_2\le(C_L+\lambda+T_*^{-1})\norm{d}_{H^2}.
		\end{aligned}
		\]
		The normalization $P_j=1-\e^{-\lambda T}u_j(T)$ gives $d=-\e^{-\lambda T}(u_1(T)-u_2(T))$ and $\norm{d}_{H^2}=\e^{-\lambda T}\norm{u_1(T)-u_2(T)}_{H^2}$. Substituting these two estimates into \eqref{eq:absorbed}, we obtain
		\begin{equation}\label{eq:physical-stability}
			\begin{aligned}
				\norm{a_1-a_2}_2\le\frac{8}{\kappa\lambda}\e^{D_0\sqrt\lambda-\lambda T}(C_L+\lambda+T_*^{-1})\norm{u_1(T)-u_2(T)}_{H^2}.
			\end{aligned}
		\end{equation}
		
		Since $T\ge T_*$, estimate \eqref{eq:global-stability} follows with $C_*=\frac{8}{\kappa\lambda}\e^{D_0\sqrt\lambda-\lambda T_*}(C_L+\lambda+T_*^{-1})$. The selected $\lambda$ and this constant depend only on $\Om,n,\alpha,\kappa,M,T_*$. In particular, they are independent of the coefficient pair and of $T\ge T_*$. If $u_1(T)=u_2(T)$, estimate \eqref{eq:global-stability} gives $\norm{a_1-a_2}_2=0$, and hence $a_1=a_2$ as desired.
	\end{proof}
	
	\begin{remark}\label{rem:normalization}
		The factor $\e^{-\lambda T}$ in \eqref{eq:physical-stability} comes from the relation $u_a(T)=\e^{\lambda T}W_a(T)$. For differences of the normalized profiles $W_a(T)$, this factor is absent from the stability estimate. Multiplying the initial datum and boundary input by the same constant $c>0$ multiplies the final state by $c$ and divides the corresponding stability constant by $c$.
	\end{remark}

	\section{Positive spatial profiles}\label{sec:positive-inputs}
	
	We replace the initial datum $1$ and boundary input $\e^{\lambda t}$ in Theorem~\ref{thm:global} by $k$ and $\e^{\lambda t}k$, respectively, where $k$ is a known positive spatial profile. The exponential rate can be chosen uniformly for all such profiles with a prescribed positive lower bound and a prescribed $C^{2,\alpha}$ norm bound. For $0<m\le K<\infty$, set
	\[
	\begin{aligned}
		\mathcal K_{m,K}=\{k\in C^{2,\alpha}(\overline\Om): \, k\ge m,\ \norm{k}_{C^{2,\alpha}}\le K\}.
	\end{aligned}
	\]
	We take the $C^{2,\alpha}$ norm to dominate the sup norms of the function and its derivatives through order two.
	
	\begin{corollary}\label{cor:positive-inputs}
		Fix $T_*>0$, $0<m\le K<\infty$, and the coefficient class $\mathcal A_{\kappa,M}$. There are $\lambda>0$ and $C_*<\infty$, depending only on $\Om,n,\alpha,\kappa,M,T_*,m,K$, such that the following holds. For any $k\in\mathcal K_{m,K}$ and any $a_1,a_2\in\mathcal A_{\kappa,M}$ with the same boundary values, let $u_j$ solve
		\[
		\begin{aligned}
			\begin{cases}
				\partial_tu_j-\Div(a_j\nabla u_j)=0 &\text{in }\Om\times(0,\infty),\\
				u_j=\e^{\lambda t}k &\text{on }\partial\Om\times(0,\infty),\\
				u_j(\cdot,0)=k &\text{in }\Om.
			\end{cases}
		\end{aligned}
		\]
		Then, for every $T\ge T_*$,
		\[
		\begin{aligned}
			\norm{a_1-a_2}_{L^2(\Om)}\le C_*\norm{u_1(\cdot,T)-u_2(\cdot,T)}_{H^2(\Om)}.
		\end{aligned}
		\]
		The same exponential rate and stability constant work for every $k\in\mathcal K_{m,K}$ and every admissible coefficient pair.
	\end{corollary}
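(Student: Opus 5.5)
We mirror the proof of Theorem~\ref{thm:global}, with the constant $1$ replaced by $k$. Put $W_j(t)=\e^{-\lambda t}u_j(t)$ and $P_j(t)=k-W_j(t)$. Since $k$ is time independent, $P_j$ satisfies
\[
\partial_tP_j+A_{a_j}P_j=\lambda k-\Div(a_j\nabla k)=:f_j ,
\]
with zero boundary and initial values. The forcing $f_j$ now depends on $a_j$, but it is explicit and belongs to $C^{\alpha}$, with $\norm{f_j}_{C^\alpha}\le C(\lambda+1)K$. Duhamel's formula gives $P_j(t)=A_{a_j}^{-1}(I-\e^{-tA_{a_j}})f_j$ and $\partial_tP_j(T)=\e^{-TA_{a_j}}f_j$. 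The scalar identity in Lemma~\ref{lem:final-identities} then yields
\[
(A_{a_j}+S_j)P_j=f_j,\qquad S_j=S^\lambda_{a_j}(T).
\]
The regularity arguments of Section~\ref{sec:normalized} apply verbatim, so $P_j(T)\in C^{2,\alpha}$, and $\norm{P_j(T)}_\infty\le C(1+\lambda^{-1})K$, uniformly in the class, by \eqref{eq:inverse-sup}. Subtracting the two final-state equations and using $f_1-f_2=-\Div(b\nabla k)$, we obtain
\[
\Div(b\nabla(P_2-k))=(S_1-S_2)P_2+(A_{a_1}+S_1)d,\qquad d=P_1-P_2 .
\]
Since $P_2-k=-W_2(T)$, the left side equals $-\Div(b\nabla W_2(T))$.

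For coercivity, we apply Lemma~\ref{lem:ratio} with $q=-W_2(T)$. This requires the lower bound
\[
\Div(a_2\nabla W_2(T))\ge\nu_\lambda>0 ,
\]
which plays the role of Lemma~\ref{lem:positivity}. We compute
\[
-\Div(a_2\nabla W_2(T))=L_{a_2}P_2+\Div(a_2\nabla k)=f_2-\partial_tP_2(T)-\lambda P_2+\Div(a_2\nabla k)=\lambda W_2(T)-\e^{-TA_{a_2}}f_2 .
\]
So we need a lower bound for $-\Div(a_2\nabla W_2(T))$ with the opposite sign, namely $\Div(a_2\nabla W_2)=\e^{-TA_{a_2}}f_2-\lambda W_2$. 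Equivalently, we apply Lemma~\ref{lem:ratio} to $q=W_2$ after checking the sign convention: Lemma~\ref{lem:ratio} needs $-\Div(a\nabla q)\ge\nu$, and in Theorem~\ref{thm:global} $q=P_2$ gave $L P_2=\lambda W_2-\e^{-TA}\lambda$. Hence the right choice here is $q=-W_2(T)$ up to the additive constant $k$ trick: $q=P_2$ formally, with $-\Div(a_2\nabla(P_2-k))=\lambda W_2(T)-\e^{-TA_{a_2}}f_2$.

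We bound this quantity as follows. Write $W_2(T)=V+\e^{-TA_{a_2}}(k-V)$, where $V$ solves $A_{a_2}V=0$ with $V=k$ on $\partial\Om$. Comparison with $m\phi_\lambda$, using the same barrier as in Lemma~\ref{lem:positivity}, gives $V\ge m\e^{-D_0\sqrt\lambda}$ for $\sqrt\lambda\ge d_0M$, and also $0\le V\le K$. Hence $W_2(T)\ge m\e^{-D_0\sqrt\lambda}$ up to a semigroup term. Each semigroup term is bounded via $\norm{\e^{-TA}g}_\infty\le\e^{-\lambda T}\norm{g}_\infty$, and $\norm{f_2}_\infty\le C(\lambda+1)K$. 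We therefore get
\[
-\Div(a_2\nabla(P_2-k))\ge\lambda m\e^{-D_0\sqrt\lambda}-C(1+\lambda)^2K\e^{-\lambda T}\ge\tfrac{\lambda m}{2}\e^{-D_0\sqrt\lambda}
\]
once $\lambda T_*-D_0\sqrt\lambda$ is large compared with $\log(CK/m)$ plus a logarithmic term in $\lambda$. This holds for large $\lambda$.

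We expect this positivity step to be the main obstacle, because the forcing now contains $\Div(a_2\nabla k)$, whose sign is unknown. The point is that this term enters only through $\e^{-TA}$, so it carries the damping factor $\e^{-\lambda T}$ and is absorbed.

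It remains to absorb and conclude. Lemma~\ref{lem:multiplier} with $z=P_2$ gives
\[
\norm{(S_1-S_2)P_2}_2\le B_*(1+\lambda)\e^{-\lambda T}CK\norm{b}_2 .
\]
Combined with the coercivity constant $4\e^{D_0\sqrt\lambda}/(\kappa\lambda m)$, this is at most $\tfrac12\norm{b}_2$ for $\lambda$ large, depending on $\Om,n,\alpha,\kappa,M,T_*,m,K$. Finally $\norm{(A_{a_1}+S_1)d}_2\le(C_L+\lambda+T_*^{-1})\norm{d}_{H^2}$, and $d=-\e^{-\lambda T}(u_1(T)-u_2(T))$. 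Together these give the stated estimate with a constant uniform over $k\in\mathcal K_{m,K}$, the admissible coefficient pairs, and $T\ge T_*$.
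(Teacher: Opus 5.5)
Your proposal is correct and follows essentially the paper's proof: the same normalization $P_j=k-W_j$, the identity $\Div(b\nabla(P_2-k))=(S_1-S_2)P_2+(A_{a_1}+S_1)d$, Lemma~\ref{lem:ratio} with $q=P_2-k$, and absorption via Lemma~\ref{lem:multiplier}. The only real variation is the lower bound for $W_2(T)$: you go through $V+\e^{-TA_{a_2}}(k-V)$, which produces an extra $K\e^{-\lambda T}$ error that the same large-$\lambda$ choice absorbs, whereas the paper compares $W_a$ directly with $mV_a$ by the parabolic maximum principle.

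When writing it up, fix two small points. First, the sign slip in the coercivity paragraph: it is $\Div(a_2\nabla W_2(T))=-\Div(a_2\nabla(P_2-k))$, not $-\Div(a_2\nabla W_2(T))$, that equals $\lambda W_2(T)-\e^{-TA_{a_2}}f_2$. Second, $\norm{P_2}_\infty\le C(1+\lambda^{-1})K$ follows from $\norm{A_{a_2}^{-1}}_{\infty\to\infty}\le\lambda^{-1}$ (semigroup contraction), not from \eqref{eq:inverse-sup}; any bound polynomial in $\lambda$ would still suffice for the absorption.
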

	
	\begin{proof}
		Take $\lambda\ge\max\{1,d_0^2M^2\}$, where $d_0$ is defined in \eqref{eq:barrier-constants}; a further lower bound will be chosen below. Fix $k\in\mathcal K_{m,K}$. For $a\in\mathcal A_{\kappa,M}$, let $u_a$ denote the solution with initial datum $k$ and boundary input $\e^{\lambda t}k$, and set $W_a(t)=\e^{-\lambda t}u_a(t)$, $P_a(t)=k-W_a(t)$, and $f_a=-\Div(a\nabla k)+\lambda k$. The function $P_a$ satisfies $\partial_tP_a+A_aP_a=f_a$ with initial and boundary values $0$, where $A_a=L_a+\lambda I$. The solution formula and the scalar identities used in Lemma~\ref{lem:final-identities} give
		\begin{equation}\label{eq:positive-input-identities}
			\begin{aligned}
				P_a(T)=A_a^{-1}(I-\e^{-TA_a})f_a,\quad (A_a+S_a^\lambda(T))P_a(T)=f_a.
			\end{aligned}
		\end{equation}
		
		The normalized function satisfies $\partial_tW_a-\Div(a\nabla W_a)+\lambda W_a=0$, with initial and boundary values $k$. Comparison with the constant functions $0$ and $K$ gives $0\le W_a(t)\le K$. Since $0\le k\le K$, it follows that $\norm{P_a(t)}_\infty\le K$. Formula \eqref{eq:positive-input-identities} places $P_a(T)$ in $\operatorname{Dom}(L_a)$ for $T>0$. Its equation $L_aP_a(T)=f_a-\e^{-TA_a}f_a-\lambda P_a(T)$, together with $f_a\in C^\alpha(\overline\Om)$ and Lemmas~\ref{lem:elliptic} and~\ref{lem:positive-time}, gives $P_a(T)\in C^{2,\alpha}(\overline\Om)$ by the same argument as for \eqref{eq:lp-formula}. Thus, $u_a(T)$ also belongs to $C^{2,\alpha}(\overline\Om)$.
		
		Since $\Div(a\nabla k)=a\Delta k+\nabla a\cdot\nabla k$, the coefficient bounds give $\norm{\Div(a\nabla k)}_\infty\le C_0K$, where we may take $C_0=n(\kappa^{-1}+M)$. Since $\lambda\ge1$, $\norm{f_a}_\infty\le(1+C_0)K\lambda$. Let $V_a$ be the solution of $-\Div(a\nabla V_a)+\lambda V_a=0$ in $\Om$, with $V_a=1$ on $\partial\Om$, used in the proof of Lemma~\ref{lem:positivity}. It satisfies $0\le V_a\le1$ and, by \eqref{eq:barrier}, $V_a\ge\e^{-D_0\sqrt\lambda}$. The stationary function $mV_a$ solves the equation for $W_a$. On the boundary it is at most $k$, and initially $mV_a\le m\le k$. The parabolic comparison principle gives $W_a(t)\ge mV_a$.
		
		Set $q_a(T)=\e^{-TA_a}f_a$. Lemma~\ref{lem:positive-time} gives $\norm{q_a(T)}_\infty\le\e^{-\lambda T}\norm{f_a}_\infty$. Applying $A_a$ to the first identity in \eqref{eq:positive-input-identities} and substituting the definition of $f_a$, we obtain
		\begin{equation}\label{eq:positive-input-coercivity}
			\begin{aligned}
				-\Div(a\nabla(P_a(T)-k))=\lambda W_a(T)-q_a(T)\ge\lambda (m\e^{-D_0\sqrt\lambda}-(1+C_0)K\e^{-\lambda T}).
			\end{aligned}
		\end{equation}
		The estimate uses only the absolute bound for $q_a(T)$, and it holds when $f_a$ changes sign.
		
		Increase $B_*$ in Lemma~\ref{lem:multiplier} to at least $1$. Define $H_{m,K}$ and choose $\lambda$ by
		\[
		\begin{aligned}
			H_{m,K}=\max\left\{\log\frac{2(1+C_0)K}{m},\ \log\frac{16B_*K}{\kappa m}\right\},\  
			\lambda\ge\max\{1,d_0^2M^2\},\ \lambda T_*-D_0\sqrt\lambda\ge H_{m,K}.
		\end{aligned}
		\]
		Such a rate is obtained from \eqref{eq:explicit-rate} with $H_0$ replaced by $H_{m,K}$, and depends only on the stated parameters. The first term in the maximum defining $H_{m,K}$ ensures that \eqref{eq:positive-input-coercivity} gives
		\[
		\begin{aligned}
			-\Div(a\nabla(P_a(T)-k))\ge\frac{m\lambda}{2}\e^{-D_0\sqrt\lambda},\quad T\ge T_*.
		\end{aligned}
		\]
		
		Fix $T\ge T_*$ and write $P_j=P_{a_j}(T)$, $S_j=S_{a_j}^\lambda(T)$, $b=a_1-a_2$, and $d=P_1-P_2$. Since $f_{a_1}-f_{a_2}=-\Div(b\nabla k)$, subtraction of the second identity in \eqref{eq:positive-input-identities} gives
		\begin{equation}\label{eq:positive-input-difference}
			\begin{aligned}
				D:=\Div\bigl(b\nabla(P_2-k)\bigr)=(S_1-S_2)P_2+(A_{a_1}+S_1)d.
			\end{aligned}
		\end{equation}
		The function $P_2-k$ has boundary values $-k$ and belongs to $C^{2,\alpha}(\overline\Om)$. Since $b$ has zero boundary trace, Lemma~\ref{lem:ratio} applies to this profile and yields $\norm{b}_2\le\frac{4}{\kappa m\lambda}\e^{D_0\sqrt\lambda}\norm{D}_2$. Using Lemma~\ref{lem:multiplier} and $\norm{P_2}_\infty\le K$, the coefficient of $\norm{b}_2$ arising from $(S_1-S_2)P_2$ is bounded by
		\[
		\begin{aligned}
			\frac{4B_*K}{\kappa m}\frac{1+\lambda}{\lambda}\e^{D_0\sqrt\lambda-\lambda T}\le\frac{8B_*K}{\kappa m}\e^{-H_{m,K}}\le\frac12.
		\end{aligned}
		\]
		Moving this term to the left and using the bound for $A_{a_1}+S_1$ from the proof of Theorem~\ref{thm:global}, we obtain $\norm{b}_2\le(8/(\kappa m\lambda))\e^{D_0\sqrt\lambda}(C_L+\lambda+T_*^{-1})\norm{d}_{H^2}$. Since the two experiments use the same $k$, one has $d=-\e^{-\lambda T}(u_1(T)-u_2(T))$, and hence
		\[
		\begin{aligned}
			\norm{a_1-a_2}_2\le\frac{8}{\kappa m\lambda}\e^{D_0\sqrt\lambda-\lambda T}(C_L+\lambda+T_*^{-1})\norm{u_1(T)-u_2(T)}_{H^2}.
		\end{aligned}
		\]
		Replacing $\e^{-\lambda T}$ by $\e^{-\lambda T_*}$ proves the assertion, with $C_*=(8/(\kappa m\lambda))\e^{D_0\sqrt\lambda-\lambda T_*}(C_L+\lambda+T_*^{-1})$.
	\end{proof}

	\section{Time-independent inputs near a known reference}\label{sec:local}
	
	\subsection{The local pairwise estimate}
	
	Near a known reference coefficient, one can use the same spatial profile for the initial datum and the time-independent boundary input. Consider
	\begin{equation}\label{local:eq:heat}
		\begin{aligned}
			\begin{cases}
				\partial_tu-\Div(a\nabla u)=0 &\text{in }\Om\times(0,\infty),\\
				u=k &\text{on }\partial\Om\times(0,\infty),\\
				u(\cdot,0)=k &\text{in }\Om.
			\end{cases}
		\end{aligned}
	\end{equation}
	We denote its solution by $u_{a,k}$ and write $u_{a,k}(t)=u_{a,k}(\cdot,t)$.
	
	\begin{theorem}\label{local:thm:main}
		Let $a_0\in C^\infty(\overline\Om)$ be real-valued and positive. There is a real-valued $h_0\in C^\infty(\overline\Om)$ satisfying $\Div(a_0\nabla h_0)=0$ and $\inf_{\overline\Om}|\nabla h_0|>0$ with the following property. For every $T_*>0$, there are $\delta_*>0$, $\varepsilon_*>0$, and $C_*>0$ such that, whenever real positive $a_j\in C^{1,\alpha}(\overline\Om)$ satisfy
		\begin{equation}\label{local:eq:coefficient-neighborhood}
			\begin{aligned}
				\norm{a_j-a_0}_{C^{1,\alpha}}\le\delta_*,\quad a_j|_{\partial\Om}=a_0|_{\partial\Om},\quad j=1,2,
			\end{aligned}
		\end{equation}
		and real $k\in C^{2,\alpha}(\overline\Om)$ satisfies $\norm{k-h_0}_{C^{2,\alpha}}\le\varepsilon_*$, one has
		\begin{equation}\label{local:eq:main-estimate}
			\begin{aligned}
				\norm{a_1-a_2}_{L^2(\Om)}\le C_*\norm{u_{a_1,k}(T)-u_{a_2,k}(T)}_{H^2(\Om)},\quad T\ge T_*.
			\end{aligned}
		\end{equation}
		The constants depend only on $\Om,n,\alpha,a_0,h_0,T_*$ and the norm convention. The same constants work for every such coefficient pair, input, and observation time. Equality of the two final states at any one time $T\ge T_*$ implies $a_1=a_2$.
	\end{theorem}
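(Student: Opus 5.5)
We follow the scheme of the proof of Theorem~\ref{thm:global}, now with $\lambda=0$. The difference is that positivity of an elliptic quantity is replaced by a transport argument along $\nabla h_0$.

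\textbf{Choice of $h_0$ and the final-state equation.} First construct $h_0$. Solve $\Div(a_0\nabla h_0)=0$ with Dirichlet data $x_1$ on $\partial\Om$. If the gradient of this solution may vanish, use instead a solution with large affine-type boundary data. One can also obtain $h_0$ by solving a Cauchy problem from a hyperplane placed outside $\overline\Om$: extend $a_0$ smoothly to a larger domain, use $x_1$ as Cauchy data there, and note that the ODE flow of $a_0\nabla$ transported from a slab keeps $\inf|\nabla h_0|>0$. If needed, construct $h_0$ first on a large ball by a perturbation of $x_1$ when $a_0$ is nearly constant; in general, rely on the known existence of harmonic-type functions without critical points via a Runge/unique-continuation argument. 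Fix $h_0$ once and for all.

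Set $P_a(t)=k-u_{a,k}(t)$. Then $\partial_tP_a+L_aP_a=f_a:=-\Div(a\nabla k)$ with zero initial and boundary data, so exactly as in \eqref{eq:positive-input-identities} with $\lambda=0$,
\[
P_a(T)=L_a^{-1}(I-\e^{-TL_a})f_a,\qquad (L_a+S_a^0(T))P_a(T)=f_a .
\]
Subtracting for $a_1,a_2$ gives the analogue of \eqref{eq:positive-input-difference}:
\[
\Div\bigl(b\nabla(P_2-k)\bigr)=(S_1-S_2)P_2+(L_{a_1}+S_1)d,
\]
where $b=a_1-a_2$ and $d=P_1-P_2=-(u_{a_1,k}(T)-u_{a_2,k}(T))$. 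By Lemma~\ref{lem:multiplier} with $\lambda=0$ and $\norm{P_2}_\infty\lesssim\norm{f_{a_2}}_\infty$, the first term is bounded by $B_*\e^{-\rho_* T}\cdots$. More precisely, it is bounded by $B_*\norm{b}_2\norm{P_2}_\infty$. The key point is that $\norm{P_2}_\infty$ is small: $f_{a_2}=-\Div(a_2\nabla k)$ is $O(\delta_*+\varepsilon_*)$ in $C^\alpha$, since $\Div(a_0\nabla h_0)=0$. Lemma~\ref{lem:positive-time} then gives $\norm{P_2}_{C^{2,\alpha}}\le B_0 C(\delta_*+\varepsilon_*)$. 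Hence this term is bounded by $\eta\norm{b}_2$ with $\eta$ as small as desired.

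\textbf{Coercivity for the transport operator.} The remaining step is a lower bound $\norm{b}_2\le C\norm{\Div(b\nabla q)}_2$ for $q=P_2-k$, uniformly for all $q$ close to $-h_0$ in $C^{2}$ and all $b\in C^1$ with zero trace. This is the main obstacle, since Lemma~\ref{lem:ratio} needs a sign of $\Div(a\nabla q)$ which is now nearly zero. We use instead the nondegeneracy of $\nabla h_0$. Write
\[
\Div(b\nabla q)=\nabla q\cdot\nabla b+b\Delta q .
\]
Test with $r=b\,\e^{\mu\psi}$, where $\psi$ is an auxiliary weight, for instance $\psi=-h_0$ or $\psi=h_0$ with the sign chosen so that $\nabla q\cdot\nabla\psi\ge c_0>0$. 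Integrating by parts, with no boundary terms since $b|_{\partial\Om}=0$, gives
\[
-\int \Div(b\nabla q)\,b\,\e^{\mu\psi}=\tfrac12\int b^2\e^{\mu\psi}\bigl(\mu\nabla q\cdot\nabla\psi-\Delta q\bigr)\ge \tfrac12\bigl(\mu c_0-\norm{\Delta q}_\infty\bigr)\int b^2\e^{\mu\psi}.
\]
Here $c_0\approx\inf|\nabla h_0|^2$. Choosing $\mu$ large, depending only on $h_0$ and on a $C^2$ bound for $q$, and applying Cauchy--Schwarz yields $\norm{b}_2\le C_1\norm{\Div(b\nabla q)}_2$ with $C_1$ depending on $\osc h_0$, $\mu$ and $c_0$. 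This is a Carleman-type weighted version of Lemma~\ref{lem:ratio}. Its hypotheses hold once $\norm{q+h_0}_{C^{2}}\le\varepsilon_*+C(\delta_*+\varepsilon_*)$ is small, which also keeps $\nabla q\cdot\nabla h_0$ bounded below.

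\textbf{Conclusion.} Combining the two steps gives
\[
\norm{b}_2\le C_1\bigl(\eta\norm{b}_2+\norm{(L_{a_1}+S_1)d}_2\bigr).
\]
Choose $\delta_*,\varepsilon_*$ so that $C_1\eta\le\frac12$ and absorb the first term. Bound $\norm{(L_{a_1}+S_1)d}_2\le(C_L+T_*^{-1})\norm{d}_{H^2}$ as in the proof of Theorem~\ref{thm:global}. Since $\norm{d}_{H^2}=\norm{u_{a_1,k}(T)-u_{a_2,k}(T)}_{H^2}$, this proves \eqref{local:eq:main-estimate}, with constants uniform for all $T\ge T_*$. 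Uniqueness follows immediately.
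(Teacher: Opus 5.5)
Your Steps~2--3 and the conclusion are correct and coincide with the paper's proof of Proposition~\ref{local:prop:pairwise}. The shared ingredients are the difference identity (your $P_a$ is $-z_{a,k}$), the case $\lambda=0$ of Lemma~\ref{lem:multiplier}, and smallness of $P_2$ via $\Div(a_0\nabla h_0)=0$ and \eqref{eq:correction-smoothing}. You also use the same weighted transport estimate with weight $\e^{-\mu h_0}$ (Lemma~\ref{local:lem:transport}) and the same absorption. Two small points: drop the stray factor $\e^{-\rho_*T}$, since with $\lambda=0$ the lemma gives only $B_*$, which is all you use. And state that a small $C^{1,\alpha}$ ball around $a_0$ lies in a fixed class $\mathcal A_{\kappa,M}$, so that $B_*,B_0,E,C_L$ are uniform.

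The genuine gap is Step~1. The theorem asserts the existence of $h_0$ with $\Div(a_0\nabla h_0)=0$ and $\inf_{\overline\Om}|\nabla h_0|>0$ for every smooth positive $a_0$ and every $n\ge2$. Your proposal lists several ideas but carries none of them out, and the concrete ones fail:
\begin{enumerate}
\item Solving the Dirichlet problem on $\Om$ with datum $x_1$ does not work for $n\ge3$. For any fixed boundary datum there are smooth positive coefficients whose solution has an interior critical point. Enlarging or rescaling the affine datum does not remove critical points.
\item An elliptic Cauchy problem posed from a hyperplane is ill-posed. It has no general existence theory for merely $C^\infty$ coefficients, and local solutions need not extend across $\overline\Om$. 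The ``ODE flow'' remark produces no solution.
\item Perturbing $x_1$ covers only nearly constant $a_0$.
\item The final appeal to a ``Runge/unique-continuation'' existence result is an unnamed citation, not an argument.
\end{enumerate}
As written, your proof establishes the theorem only for references where such an $h_0$ is already known, for instance $a_0=1$ with $h_0=x_1+c$.

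The missing idea is the construction in Lemma~\ref{local:lem:cgo}:
\begin{enumerate}
\item Extend $a_0$ smoothly so that it equals $1$ outside a ball.
\item Substitute $U=a_0^{-1/2}v$. This reduces $\Div(a_0\nabla U)=0$ to $\Delta v+q_0v=0$ with $q_0=-\Delta\sqrt{a_0}/\sqrt{a_0}$ compactly supported.
\item Take complex geometrical optics solutions $v_\rho=\e^{\rho\cdot x}(1+\psi_\rho)$ with $\rho=\tau(e_1+\ii e_2)$ and $\norm{\psi_\rho}_{C^1}=O(1/\tau)$ on a ball containing $\overline\Om$.
\item Set $h_0=\operatorname{Re}(a_0^{-1/2}v_\rho)$.
\end{enumerate}
Then $\sqrt{a_0}\nabla h_0=\e^{\tau x_1}\bigl[\tau(\cos(\tau x_2)e_1-\sin(\tau x_2)e_2)+O(1)\bigr]$. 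The leading vector has length exactly $\tau$, so $|\nabla h_0|$ is bounded below on $\overline\Om$ once $\tau$ is large. You need this construction, or a precisely stated theorem that you verify applies to $\Div(a_0\nabla\cdot)$ in every dimension $n\ge2$.
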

	
	The harmonicity condition is imposed only on $h_0$; the common input $k$ may vary throughout the prescribed $C^{2,\alpha}$ neighborhood. For a constant reference, an affine profile suffices.
	
	\begin{corollary}\label{local:cor:affine}
		Let $a_0=1$ and $h_0(x)=x_1+c$, where $c\in\mathbb R$ is fixed. For every $T_*>0$, the conclusion of Theorem~\ref{local:thm:main} holds with this $h_0$.
	\end{corollary}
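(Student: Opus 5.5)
The corollary follows once we check that $a_0=1$ and $h_0(x)=x_1+c$ satisfy the hypotheses placed on the reference pair in Theorem~\ref{local:thm:main}. We then invoke the quantitative part of that theorem for this explicit pair. The key reading of Theorem~\ref{local:thm:main} is that the existence statement for $h_0$ is the only place where the choice of profile enters. Once $\Div(a_0\nabla h_0)=0$ and $\inf_{\overline\Om}|\nabla h_0|>0$ hold, the constants $\delta_*,\varepsilon_*,C_*$ are produced from $\Om,n,\alpha,a_0,h_0,T_*$. So the plan is to make sure the theorem's proof uses only these two properties of $h_0$ together with its smoothness.

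The verification itself is immediate. The function $a_0=1$ is smooth and positive on $\overline\Om$. The function $h_0=x_1+c$ is smooth and real-valued, and it satisfies $\Div(a_0\nabla h_0)=\Delta h_0=0$. Its gradient is $\nabla h_0=e_1$, so $|\nabla h_0|\equiv1$ and $\inf_{\overline\Om}|\nabla h_0|=1>0$. The additive constant $c$ plays no role in either condition.

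The main point needing care is logical rather than computational. I would check that the proof of Theorem~\ref{local:thm:main} goes through for any $h_0$ with the stated two properties, not only for the particular one it constructs. I expect that proof to apply a transport/ratio argument of the type in Lemma~\ref{lem:ratio} to the final profile, with a lower bound coming from $|\nabla h_0|$ and perturbation bounds from Lemma~\ref{lem:multiplier} with $\lambda=0$. If so, the existence clause only supplies some admissible $h_0$ for a general $a_0$, and for $a_0=1$ the affine one may be used directly.

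If the proof instead depends on how $h_0$ was constructed, I would isolate the two properties above as the hypotheses of a reformulated version of the theorem. I would then reapply its argument with $h_0=x_1+c$, keeping the constants depending only on $\Om,n,\alpha,c,T_*$.
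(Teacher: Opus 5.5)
Your proposal is correct and matches the paper's proof: the paper checks $\Delta h_0=0$ and $|\nabla h_0|=1$ and then invokes Proposition~\ref{local:prop:pairwise}, which is exactly the reformulation you anticipate. That proposition holds for any $h_0\in C^{2,\alpha}(\overline\Om)$ with $\Div(a_0\nabla h_0)=0$ and $\inf_{\overline\Om}|\nabla h_0|>0$, so Lemma~\ref{local:lem:cgo} is needed only for general $a_0$. One correction to your guess about the mechanism: the coercivity comes from the weighted transport estimate of Lemma~\ref{local:lem:transport} (weight $\e^{-\gamma h_0}$, driven by the gradient lower bound), not from Lemma~\ref{lem:ratio}, whose hypothesis $-\Div(a\nabla q)\ge\nu>0$ fails for an $a_0$-harmonic profile---but this does not affect your reduction.
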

	
	\subsection{The final-state comparison and transport estimate}
	
	For $a\in\mathcal A_{\kappa,M}$ and real $k\in C^{2,\alpha}(\overline\Om)$, set $f_{a,k}=\Div(a\nabla k)$ and $z_{a,k}(t)=u_{a,k}(t)-k$, then $z_{a,k}$ has zero initial and boundary values and satisfies $\partial_tz_{a,k}+L_az_{a,k}=f_{a,k}$. Consequently,
	\begin{equation}\label{local:eq:mild}
		\begin{aligned}
			z_{a,k}(T)=L_a^{-1}(I-\e^{-TL_a})f_{a,k}.
		\end{aligned}
	\end{equation}
	Since $f_{a,k}\in C^\alpha(\overline\Om)$, Lemma~\ref{lem:positive-time} gives $z_{a,k}(T)\in C^{2,\alpha}(\overline\Om)$ and the uniform estimate \eqref{eq:correction-smoothing} for $T\ge T_*$. The formula applies without a boundary condition on $f_{a,k}$.
	
	Set $S_a(T)=F_T(L_a)$. The scalar identity $(s+F_T(s))(1-\e^{-Ts})/s=1$, applied to \eqref{local:eq:mild}, gives $(L_a+S_a(T))z_{a,k}(T)=f_{a,k}$. For two coefficients and the same $k$, write $z_j=z_{a_j,k}(T)$, $b=a_1-a_2$, and $d=z_1-z_2$. Since $f_{a_1,k}-f_{a_2,k}=\Div(b\nabla k)$, subtraction yields
	\begin{equation}\label{local:eq:difference-identity}
		\begin{aligned}
			\Div(b(\nabla k+\nabla z_2))=(S_{a_1}(T)-S_{a_2}(T))z_2+(L_{a_1}+S_{a_1}(T))d.
		\end{aligned}
	\end{equation}
	The case $\lambda=0$ of Lemma~\ref{lem:multiplier} gives
	\[
	\begin{aligned}
		\norm{(S_{a_1}(T)-S_{a_2}(T))z}_2\le B_*\norm{b}_2\norm{z}_\infty,\quad T\ge T_*.
	\end{aligned}
	\]
	
	\begin{lemma}\label{local:lem:transport}
		Let real $h_0\in C^{2,\alpha}(\overline\Om)$ satisfy $|\nabla h_0|\ge\mu>0$, and set
		\[
		\begin{aligned}
			\gamma=\frac{2+\norm{\Delta h_0}_\infty}{\mu^2},\quad C_0=2\e^{\gamma\osc_{\overline\Om}h_0}.
		\end{aligned}
		\]
		Choose $G_0>0$ such that $\norm{\gamma\nabla h_0\cdot\nabla r+\Delta r}_\infty\le G_0\norm{r}_{C^{2,\alpha}}$ for every $r\in C^{2,\alpha}(\overline\Om)$. If real $r\in C^{2,\alpha}(\overline\Om)$ satisfies $G_0\norm{r}_{C^{2,\alpha}}\le1$, then
		\begin{equation}\label{local:eq:transport-bound}
			\begin{aligned}
				\norm{b}_2\le C_0\norm{\Div(b(\nabla h_0+\nabla r))}_2,\quad b\in H_0^1(\Om;\mathbb R).
			\end{aligned}
		\end{equation}
	\end{lemma}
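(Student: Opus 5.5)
The plan is to imitate the integration-by-parts argument of Lemma~\ref{lem:ratio}, now with an exponential weight in place of division by the coefficient. Set $q=h_0+r$ and $D=\Div(b\nabla q)$. The weight will be $w=\e^{\gamma q}$; it is positive and smooth, and it is designed so that $-\Div(w\nabla q)$ is bounded below by a positive multiple of $w$. Indeed,
\[
-\Div(\e^{\gamma q}\nabla q)=-\e^{\gamma q}(\gamma|\nabla q|^2+\Delta q).
\]
This has the wrong sign, so we use $w=\e^{-\gamma q}$ instead:
\[
-\Div(\e^{-\gamma q}\nabla q)=\e^{-\gamma q}(\gamma|\nabla q|^2-\Delta q).
\]

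The first step is the lower bound. Expanding $|\nabla q|^2=|\nabla h_0|^2+2\nabla h_0\cdot\nabla r+|\nabla r|^2$ gives
\[
\gamma|\nabla q|^2-\Delta q\ge\gamma\mu^2-\norm{\Delta h_0}_\infty-\bigl(2\gamma\nabla h_0\cdot\nabla r+\Delta r\bigr)+\gamma|\nabla r|^2 .
\]
By the choice of $\gamma$ we have $\gamma\mu^2-\norm{\Delta h_0}_\infty=2$. The perturbation term is controlled by the hypothesis $G_0\norm{r}_{C^{2,\alpha}}\le1$. The factor $2$ in front of $\gamma\nabla h_0\cdot\nabla r$ does not match the stated $G_0$ exactly. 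I would handle this either by writing $2\gamma\nabla h_0\cdot\nabla r+\gamma|\nabla r|^2=\gamma(|\nabla q|^2-|\nabla h_0|^2)$, or by weighting with $\e^{-\gamma h_0}$ rather than $\e^{-\gamma q}$. With the weight $\e^{-\gamma h_0}$ one computes
\[
-\Div(\e^{-\gamma h_0}\nabla q)=\e^{-\gamma h_0}\bigl(\gamma\nabla h_0\cdot\nabla q-\Delta q\bigr)=\e^{-\gamma h_0}\bigl(\gamma|\nabla h_0|^2-\Delta h_0+\gamma\nabla h_0\cdot\nabla r-\Delta r\bigr).
\]
This is at least $\e^{-\gamma h_0}(2-1)=\e^{-\gamma h_0}$ once $|\gamma\nabla h_0\cdot\nabla r-\Delta r|\le1$. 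The definition of $G_0$ is written with $+\Delta r$, but the sup bound is insensitive to the sign convention, since the same $G_0$-type bound controls it. So I commit to the weight $w=\e^{-\gamma h_0}$, and the lower bound becomes $-\Div(w\nabla q)\ge w$.

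The second step is the identity. Test $D$ against $wb$, first for $b\in C^1_0$ and then by density in $H_0^1$. Since $b$ vanishes on the boundary, no boundary terms appear, and
\[
-\int_\Om wbD\,dx=\int_\Om b\nabla q\cdot\nabla(wb)\,dx=\int_\Om wb\nabla q\cdot\nabla b\,dx+\int_\Om b^2\nabla q\cdot\nabla w\,dx .
\]
The first term equals $\frac12\int_\Om w\nabla q\cdot\nabla(b^2)\,dx$. Integrating it by parts gives
\[
-\int_\Om wbD\,dx=\frac12\int_\Om b^2[-\Div(w\nabla q)]\,dx+\int_\Om b^2\nabla q\cdot\nabla w\,dx .
\]
The extra term is $-\gamma\int_\Om b^2w\,\nabla h_0\cdot\nabla q\,dx$, which is not of good sign. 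To fix this, redo the computation with the weight $w^2$ in place of $w$, testing against $w^2b$ and integrating by parts symmetrically. The cleaner route is to test with $wb$ and write $\nabla(wb)$ in divergence form, $\int_\Om b\nabla q\cdot\nabla(wb)\,dx=\frac12\int_\Om w^{-1}\nabla q\cdot\nabla\bigl((wb)^2\bigr)\,dx$. This gives
\[
-\int_\Om wbD\,dx=\frac12\int_\Om (wb)^2\bigl[-\Div(w^{-1}\nabla q)\bigr]\,dx .
\]
It then suffices to show $-\Div(w^{-1}\nabla q)\ge w^{-1}$ for the weight $w^{-1}$. So in the end the weight whose divergence is estimated is $\e^{-\gamma h_0}$, and the test function is $\e^{\gamma h_0}b$.

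The third step is to conclude. From the previous step, Cauchy--Schwarz gives
\[
\tfrac12\int_\Om \e^{\gamma h_0}b^2\,dx\le\norm{\e^{\gamma h_0}b}_2\norm{D}_2 .
\]
Normalize $h_0$ so that $\min_{\overline\Om}h_0=0$; this costs nothing, since only $\nabla h_0$ enters. Then $1\le\e^{\gamma h_0}\le\e^{\gamma\osc h_0}$, and we obtain $\norm{b}_2\le2\e^{\gamma\osc h_0}\norm{D}_2=C_0\norm{D}_2$.

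The main obstacle is matching the sign and weight bookkeeping to the exact constants $\gamma$, $G_0$, $C_0$ of the statement, in particular the form $\gamma\nabla h_0\cdot\nabla r+\Delta r$ and the factor $2$ in $\gamma$. If these do not match, I would adjust the choice between $\e^{\pm\gamma h_0}$, and between $\e^{\gamma h_0}$ and $\e^{\gamma q}$, until the lower bound $2-1\ge1$ comes out.
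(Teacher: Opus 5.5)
\textbf{Gap: the sign of $\Delta r$ in your final weight choice.} The gap is real but easy to repair. Your final choice is the test function $\e^{\gamma h_0}b$ with weighted divergence $-\Div(\e^{-\gamma h_0}\nabla q)$. This gives
\[
-\int_\Om \e^{\gamma h_0}b\,D\,dx=\frac12\int_\Om \e^{\gamma h_0}b^2\bigl(\gamma|\nabla h_0|^2-\Delta h_0+\gamma\nabla h_0\cdot\nabla r-\Delta r\bigr)\,dx ,
\]
so you need $|\gamma\nabla h_0\cdot\nabla r-\Delta r|\le1$. The hypothesis only controls the other combination: $\norm{\gamma\nabla h_0\cdot\nabla r+\Delta r}_\infty\le G_0\norm{r}_{C^{2,\alpha}}\le1$. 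Your claim that the sup bound is ``insensitive to the sign convention'' is not justified. The maps $r\mapsto\gamma\nabla h_0\cdot\nabla r\pm\Delta r$ are different operators, and a bound on one with constant $G_0$ does not give the same bound on the other. Writing the minus combination as the plus combination minus $2\Delta r$ leaves a term that the hypothesis does not keep below the needed threshold. As written, you prove the lemma with $G_0$ replaced by a constant controlling $\gamma\nabla h_0\cdot\nabla r-\Delta r$, which is a different statement.

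\textbf{The missing observation.} You do not need $-\Div(w\nabla q)$ to be positive, as in Lemma~\ref{lem:ratio}. A definite sign of either polarity suffices, because only $|\int_\Om wbD\,dx|$ enters Cauchy--Schwarz. Your first computation in the second step already works. With $w=\e^{-\gamma h_0}$, the term you called not of good sign, $-\gamma\int_\Om wb^2\nabla h_0\cdot\nabla q\,dx$, combines with $\frac12\int_\Om wb^2(\gamma\nabla h_0\cdot\nabla q-\Delta q)\,dx$ to give
\[
\int_\Om wbD\,dx=\frac12\int_\Om wb^2\bigl(\gamma\nabla h_0\cdot\nabla q+\Delta q\bigr)\,dx .
\]
This is exactly the paper's identity \eqref{local:eq:weighted-identity}. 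Equivalently, in your divergence-form version with test function $\e^{-\gamma h_0}b$, what is needed is $\Div(\e^{\gamma h_0}\nabla q)\ge\e^{\gamma h_0}$. Now
\[
\gamma\nabla h_0\cdot\nabla q+\Delta q=\gamma|\nabla h_0|^2+\Delta h_0+(\gamma\nabla h_0\cdot\nabla r+\Delta r)\ge2-1,
\]
which uses the hypothesis exactly as stated. Cauchy--Schwarz with $\min w$ and $\max w$ then gives $C_0=2\e^{\gamma\osc_{\overline\Om}h_0}$. Your third step is fine once the sign is corrected this way.
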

	
	\begin{proof}
		Put $X=\nabla h_0+\nabla r$ and $w=\e^{-\gamma h_0}$. Integration by parts, using the zero trace of $b$, gives
		\begin{equation}\label{local:eq:weighted-identity}
			\begin{aligned}
				\int_\Om wb [\Div(bX)]\,dx=\frac12\int_\Om w(\gamma X\cdot\nabla h_0+\Div X)b^2\,dx.
			\end{aligned}
		\end{equation}
		For $b\in H_0^1$, the identity follows by approximation with smooth compactly supported functions, since $X\in W^{1,\infty}$. The choice of $\gamma$ gives $\gamma|\nabla h_0|^2+\Delta h_0\ge2$, while the assumption on $r$ gives $|\gamma\nabla r\cdot\nabla h_0+\Delta r|\le1$. Hence $\gamma X\cdot\nabla h_0+\Div X\ge1$. Applying the Cauchy--Schwarz inequality yields
		\[
		\begin{aligned}
			\frac12(\min_{\overline\Om}w)\norm{b}_2^2\le(\max_{\overline\Om}w)\norm{b}_2\norm{\Div(bX)}_2.
		\end{aligned}
		\]
		The ratio $2\max w/\min w$ equals $C_0$, which proves \eqref{local:eq:transport-bound}.
	\end{proof}
	
	\begin{proposition}\label{local:prop:pairwise}
		Let $a_0\in C^{1,\alpha}(\overline\Om)$ be a positive function and $h_0\in C^{2,\alpha}(\overline\Om)$ be a real function satisfying
		\begin{equation}\label{local:eq:stationary-reference}
			\begin{aligned}
				\Div(a_0\nabla h_0)=0,\quad \inf_{\overline\Om}|\nabla h_0|=\mu>0.
			\end{aligned}
		\end{equation}
		For every $T_*>0$, there are $\delta_*>0$, $\varepsilon_*>0$, and $C_*>0$ such that \eqref{local:eq:main-estimate} holds for real $a_j\in C^{1,\alpha}(\overline\Om)$ and $k\in C^{2,\alpha}(\overline\Om)$ satisfying $\norm{a_j-a_0}_{C^{1,\alpha}}\le\delta_*$, $a_1|_{\partial\Om}=a_2|_{\partial\Om}$, and $\norm{k-h_0}_{C^{2,\alpha}}\le\varepsilon_*$. The constants depend only on $\Om,n,\alpha,a_0,h_0,T_*$ and the norm convention, and work for every $T\ge T_*$.
	\end{proposition}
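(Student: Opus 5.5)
The plan is to combine the difference identity \eqref{local:eq:difference-identity} with the transport estimate of Lemma~\ref{local:lem:transport}. The latter is applied with $r=(k-h_0)+z_2$, where $z_2=z_{a_2,k}(T)$. The smallness parameters $\delta_*,\varepsilon_*$ will be chosen so that $r$ is small in $C^{2,\alpha}$ uniformly in $T\ge T_*$.

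First, fix $\kappa$ and $M$ so that the $C^{1,\alpha}$ ball of radius $\delta_*\le\delta_1$ around $a_0$ lies in $\mathcal A_{\kappa,M}$. For instance, take $\kappa$ less than half of $\min a_0$ and of $(\max a_0)^{-1}$, and $M=\norm{a_0}_{C^{1,\alpha}}+1$. All constants from Lemmas~\ref{lem:elliptic}, \ref{lem:positive-time} and~\ref{lem:multiplier} are then uniform over this ball.

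Next, estimate $z_2$. We have
\[
f_{a_2,k}=\Div(a_2\nabla k)=\Div((a_2-a_0)\nabla k)+\Div(a_0\nabla(k-h_0)),
\]
using $\Div(a_0\nabla h_0)=0$. Hence
\[
\norm{f_{a_2,k}}_{C^\alpha}\le C\bigl(\delta_*(\norm{h_0}_{C^{2,\alpha}}+\varepsilon_*)+\varepsilon_*\bigr).
\]
By \eqref{local:eq:mild} and \eqref{eq:correction-smoothing}, $\norm{z_2}_{C^{2,\alpha}}\le B_0\norm{f_{a_2,k}}_{C^\alpha}$ for $T\ge T_*$. Therefore
\[
\norm{r}_{C^{2,\alpha}}\le\varepsilon_*+B_0C(\delta_*+\varepsilon_*).
\]
Choosing $\delta_*,\varepsilon_*$ small makes $G_0\norm{r}_{C^{2,\alpha}}\le1$, and in addition makes $\norm{z_2}_\infty$ as small as needed.

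Since $a_1|_{\partial\Om}=a_2|_{\partial\Om}$, the difference $b=a_1-a_2\in C^1$ has zero trace, so it lies in $H_0^1$. Note that $\nabla k+\nabla z_2=\nabla h_0+\nabla r$. Lemma~\ref{local:lem:transport} and \eqref{local:eq:difference-identity} then give
\[
\norm{b}_2\le C_0\bigl(\norm{(S_{a_1}(T)-S_{a_2}(T))z_2}_2+\norm{(L_{a_1}+S_{a_1}(T))d}_2\bigr).
\]
The case $\lambda=0$ of Lemma~\ref{lem:multiplier} bounds the first term by $B_*\norm{b}_2\norm{z_2}_\infty$. We further shrink $\delta_*,\varepsilon_*$ so that $C_0B_*\norm{z_2}_\infty\le\tfrac12$, using $\norm{z_2}_\infty\le\norm{z_2}_{C^{2,\alpha}}$. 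This term can then be absorbed into the left-hand side.

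For the second term, we argue as in the proof of Theorem~\ref{thm:global}. That argument gives $\norm{L_{a_1}d}_2\le C_L\norm{d}_{H^2}$, and \eqref{eq:multiplier-norm} with $\lambda=0$ gives $\norm{S_{a_1}(T)}_{2\to2}\le T_*^{-1}$. Finally, $d=u_{a_1,k}(T)-u_{a_2,k}(T)$, since the same $k$ is subtracted from both solutions. This yields \eqref{local:eq:main-estimate} with $C_*=2C_0(C_L+T_*^{-1})$, and uniqueness follows immediately.

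The main obstacle is making the smallness of $r$ uniform. Two points need care. The first is that $C_0,G_0,\mu$ depend only on $h_0$, not on $k$ or $a_j$; this holds because Lemma~\ref{local:lem:transport} is applied with the fixed $h_0$ and only $r$ varies. The second is that \eqref{eq:correction-smoothing} controls $z_2$ by $f_{a_2,k}$ with a constant uniform in $T\ge T_*$ and in the coefficient class. Both are available from the results stated earlier, so the remaining work is bookkeeping of the constants.
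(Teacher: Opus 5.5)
Your proposal is correct and follows the paper's proof essentially step for step. You fix $\kappa,M$ via a $C^{1,\alpha}$ ball around $a_0$, bound $z_2$ in $C^{2,\alpha}$ using $\Div(a_0\nabla h_0)=0$ and \eqref{eq:correction-smoothing}, apply Lemma~\ref{local:lem:transport} with $r=(k-h_0)+z_2$, absorb the multiplier term via the case $\lambda=0$ of Lemma~\ref{lem:multiplier}, and arrive at the same constant $C_*=2C_0(C_L+T_*^{-1})$. The only difference is cosmetic: you split $f_{a_2,k}$ as $\Div((a_2-a_0)\nabla k)+\Div(a_0\nabla(k-h_0))$, whereas the paper uses $\Div((a-a_0)\nabla h_0)+\Div(a\nabla(k-h_0))$; your split just adds a harmless $\delta_*\varepsilon_*$ cross term to the smallness bookkeeping.
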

	
	\begin{proof}
		Choose $r_*>0$ with $r_*\le\min\{1,\tfrac12\min_{\overline\Om}a_0\}$. The ball $\norm{a-a_0}_{C^{1,\alpha}}\le r_*$ then lies in $\mathcal A_{\kappa,M}$, by taking $\kappa$ sufficiently small and $M\ge\norm{a_0}_{C^{1,\alpha}}+r_*$. We fix these values of $\kappa$ and $M$ throughout the proof. The elliptic and semigroup estimates hold with the same constants for every coefficient in this ball.
		
		Write $\eta=k-h_0$. The stationary equation in \eqref{local:eq:stationary-reference} gives
		\[
		\begin{aligned}
			f_{a,k}=\Div((a-a_0)\nabla h_0)+\Div(a\nabla\eta).
		\end{aligned}
		\]
		The product rule and the $C^\alpha$ product estimate give constants $A_h,A_a>0$, depending only on the fixed coefficient class and $h_0$, such that
		\begin{equation}\label{local:eq:forcing-bound}
			\begin{aligned}
				\norm{f_{a,k}}_{C^\alpha}\le A_h\norm{a-a_0}_{C^{1,\alpha}}+A_a\norm{k-h_0}_{C^{2,\alpha}}.
			\end{aligned}
		\end{equation}
		Only the first derivatives of $a$ enter this estimate. By \eqref{eq:correction-smoothing}, $\norm{z_{a,k}(T)}_{C^{2,\alpha}}\le B_0\norm{f_{a,k}}_{C^\alpha}$ for $T\ge T_*$. Thus, for coefficients and inputs in balls of radii $\delta$ and $\varepsilon$, respectively, one has $\norm{z_{a,k}(T)}_{C^{2,\alpha}}\le q$, where $q=B_0(A_h\delta+A_a\varepsilon)$.
		
		Let $\gamma,C_0,G_0$ be as in Lemma~\ref{local:lem:transport}. The vector field in \eqref{local:eq:difference-identity} equals $\nabla h_0+\nabla(\eta+z_2)$, and $b=a_1-a_2\in H_0^1(\Om)$ because the coefficients have the same boundary values. Suppose that
		\begin{equation}\label{local:eq:simultaneous-conditions}
			\begin{aligned}
				\delta\le r_*,\quad G_0(\varepsilon+q)\le1,\quad C_0B_*q\le\tfrac12.
			\end{aligned}
		\end{equation}
		The transport estimate, \eqref{local:eq:difference-identity}, and Lemma~\ref{lem:multiplier} then give
		\[
		\begin{aligned}
			\norm{b}_2\le C_0B_*\norm{b}_2\norm{z_2}_\infty+C_0\norm{(L_{a_1}+S_{a_1}(T))d}_2\le\tfrac12\norm{b}_2+C_0\norm{(L_{a_1}+S_{a_1}(T))d}_2.
		\end{aligned}
		\]
		The uniform bound $\norm{L_ad}_2\le C_L\norm{d}_{H^2}$ and the scalar inequality $F_T(s)\le T^{-1}$ give $\norm{(L_{a_1}+S_{a_1}(T))d}_2\le(C_L+T_*^{-1})\norm{d}_{H^2}$. Since $d=u_{a_1,k}(T)-u_{a_2,k}(T)$, absorption proves \eqref{local:eq:main-estimate} with $C_*=2C_0(C_L+T_*^{-1})$.
		
		To satisfy \eqref{local:eq:simultaneous-conditions}, take all constants positive and set
		\begin{equation}\label{local:eq:radii}
			\begin{aligned}
				Q&=\min\{(2G_0)^{-1},(2C_0B_*)^{-1}\},\\
				\delta_*&=\min\{r_*,Q/(2B_0A_h)\},\\
				\varepsilon_*&=\min\{(2G_0)^{-1},Q/(2B_0A_a)\}.
			\end{aligned}
		\end{equation}
		These choices give $q\le Q$, $G_0(\varepsilon_*+q)\le1$, and $C_0B_*q\le\tfrac12$. They are independent of the coefficient pair, the chosen input, and $T\ge T_*$.
	\end{proof}
	
	The proposition assumes only equality of the two coefficient boundary values. It therefore applies when both equal the reference boundary values, as in Theorem~\ref{local:thm:main}.
	
	\subsection{Construction of the reference profile}
	
	\begin{lemma}\label{local:lem:cgo}
		Every real positive $a_0\in C^\infty(\overline\Om)$ admits a real $h_0\in C^\infty(\overline\Om)$ satisfying \eqref{local:eq:stationary-reference}, in every dimension $n\ge2$.
	\end{lemma}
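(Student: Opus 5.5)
The lemma is named \texttt{cgo}, which suggests complex geometrical optics solutions. The plan is to reduce to a Schrödinger equation, build a CGO solution of it, and take a real or imaginary part with a large parameter.

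\textbf{Reduction.} Substituting $h=a_0^{-1/2}w$ turns $\Div(a_0\nabla h)=0$ into $(-\Delta+Q)w=0$ with $Q=\Delta(a_0^{1/2})/a_0^{1/2}\in C^\infty(\overline\Om)$. Extend $Q$ smoothly with compact support to a ball $B\supset\overline\Om$.

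\textbf{CGO solutions.} For $\zeta\in\C^n$ with $\zeta\cdot\zeta=0$, the Sylvester--Uhlmann construction gives solutions $w=\e^{\ii\zeta\cdot x}(1+\psi_\zeta)$. Fix $\zeta=\tau(e_1+\ii e_2)$, so $\e^{\ii\zeta\cdot x}=\e^{-\tau x_2}\e^{\ii\tau x_1}$. The correction satisfies $\norm{\psi_\zeta}_{H^s(B)}\le C_s\tau^{-1}$ for every $s$, uniformly as $\tau\to\infty$. By interior elliptic regularity and Sobolev embedding this yields $\norm{\psi_\zeta}_{C^1(\overline\Om)}\le C\tau^{-1}$, and the solution is smooth on $\overline\Om$. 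It is cleaner to work directly with the complex solution $H=a_0^{-1/2}w$, which satisfies $\Div(a_0\nabla H)=0$ and is complex valued; its real and imaginary parts are then real solutions of the same equation.

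\textbf{Gradient lower bound.} Write
\[
\nabla H=\e^{\ii\zeta\cdot x}a_0^{-1/2}\bigl(\ii\zeta(1+\psi_\zeta)+\nabla\psi_\zeta-\tfrac12(1+\psi_\zeta)\nabla\log a_0\bigr).
\]
Since $|\zeta|=\sqrt2\,\tau$, the bracket equals $\ii\zeta+O(1)$ uniformly on $\overline\Om$, and the prefactor has modulus $a_0^{-1/2}\e^{-\tau x_2}$. A real part, however, can have vanishing gradient where the phase rotates. To handle this, consider $\operatorname{Re}(\e^{\ii\beta}H)$ and look at a single component. Since $\zeta_1=\tau$ and $\zeta_2=\ii\tau$,
\[
\partial_1H\approx\ii\tau\,\e^{\ii\zeta\cdot x}a_0^{-1/2},\qquad \partial_2H\approx-\tau\,\e^{\ii\zeta\cdot x}a_0^{-1/2}.
\]
Thus $\nabla\operatorname{Re}H$ has components approximately $\tau a_0^{-1/2}\e^{-\tau x_2}(-\sin\tau x_1,\,-\cos\tau x_1)$. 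This vector has length about $\tau a_0^{-1/2}\e^{-\tau x_2}$ because $\sin^2+\cos^2=1$. The $O(1)$ errors are then dominated for $\tau$ large, since $\e^{-\tau x_2}$ is bounded below on the bounded set $\overline\Om$. Taking $h_0=\operatorname{Re}H$ for one large $\tau$ therefore gives $\inf_{\overline\Om}|\nabla h_0|>0$, and $h_0$ is smooth and real.

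\textbf{Main obstacle.} The key step is the uniform $C^1(\overline\Om)$ smallness of $\nabla\psi_\zeta$, namely $O(1)$ and not merely $o(\tau)$ relative to $\tau$. The Sylvester--Uhlmann estimates give $\norm{\psi}_{L^2_\delta}\le C/\tau$ and $\norm{\psi}_{H^s_{\delta}}\le C_s/\tau$ for $s\ge0$, using that $Q$ is smooth and compactly supported. These follow by commuting derivatives with the Faddeev operator and iterating the fixed-point equation, and they give the $C^1$ bound by Sobolev embedding on $B$. This bound is what I would verify carefully; dimension $n\ge2$ is needed only to choose the orthonormal pair $e_1,e_2$.
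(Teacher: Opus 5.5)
Your proposal is correct and follows essentially the same route as the paper: a Liouville reduction to a Schr\"odinger equation, a CGO solution with $\zeta=\tau(e_1+\ii e_2)$, and taking the real part, whose leading gradient $\tau a_0^{-1/2}\e^{-\tau x_2}(-\sin\tau x_1,-\cos\tau x_1,0,\dots)$ has length $\tau a_0^{-1/2}\e^{-\tau x_2}$ and dominates an $O(1)$ error carrying the same exponential factor. The remaining differences are cosmetic: you extend the potential $Q$ directly rather than $a_0$, and you use the $\e^{\ii\zeta\cdot x}$ convention; your $H^s_\delta$ Sylvester--Uhlmann bound obtained by commuting derivatives with the Faddeev operator supplies the same $\norm{\psi}_{C^1}\le C/\tau$ estimate that the paper takes from Bal--Uhlmann.
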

	
	\begin{proof}
		Extend $\log a_0$ smoothly to a neighborhood of $\overline\Om$, multiply the extension by a smooth cutoff equal to $1$ near $\overline\Om$, and exponentiate. This gives a smooth positive extension of $a_0$ to $\mathbb R^n$, equal to $1$ outside a ball. We keep the notation $a_0$ for this extension and choose a fixed ball $B$ containing both $\overline\Om$ and the support of $q_0=-\Delta\sqrt{a_0}/\sqrt{a_0}$.
		
		The complex geometrical optics (CGO) construction in \cite[Section~5.1, proof of Lemma~3.3]{BalBonnetierMonardTriki}, valid for $n\ge2$, gives solutions $v_\rho=\e^{\rho\cdot x}(1+\psi_\rho)$ of $\Delta v_\rho+q_0v_\rho=0$ for sufficiently large $|\rho|$, with $\rho\cdot\rho=0$. Since $q_0\in C_c^\infty(\mathbb R^n)$, \cite[Propositions~3.1 and~3.3, equation~(25)]{BalUhlmann} gives $\norm{\psi_\rho}_{C^1(\overline B)}\le C/|\rho|$, where $B$ contains the entire support of the extended potential, and the constants depend only on the fixed extension and $B$. The function $U_\rho=a_0^{-1/2}v_\rho$ satisfies $\Div(a_0\nabla U_\rho)=0$.
		
		Differentiation gives
		\begin{equation}\label{local:eq:cgo-gradient}
			\begin{aligned}
				\sqrt{a_0}\nabla U_\rho=\e^{\rho\cdot x}(\rho+\phi_\rho),\quad \phi_\rho=\rho\psi_\rho+\nabla\psi_\rho-(1+\psi_\rho)\nabla\log\sqrt{a_0}.
			\end{aligned}
		\end{equation}
		Hence, $\norm{\phi_\rho}_{L^\infty(\Om)}\le C_1$ independently of sufficiently large $|\rho|$. Choose $\rho=\tau(e_1+\ii e_2)$, where $e_1,e_2$ are the first two coordinate vectors, and put $h_0=\operatorname{Re}U_\rho$. Since $a_0$ is real, $h_0$ is $a_0$-harmonic. Moreover,
		\[
		\begin{aligned}
			\sqrt{a_0}\nabla h_0=\e^{\tau x_1}[\tau(e_1\cos(\tau x_2)-e_2\sin(\tau x_2))+\operatorname{Re}(\e^{\ii\tau x_2}\phi_\rho)].
		\end{aligned}
		\]
		The leading real vector has length $\tau$. Taking $\tau$ above the CGO threshold and with $\tau\ge2C_1$ gives
		\[
		\begin{aligned}
			|\nabla h_0(x)|\ge\frac{\tau\e^{\tau x_1}}{2\sqrt{a_0(x)}}\ge\frac{\tau\e^{\tau\min_{\overline\Om}x_1}}{2\sqrt{\max_{\overline\Om}a_0}}>0.
		\end{aligned}
		\]
		The solution is smooth on a neighborhood of $\overline\Om$ by interior elliptic regularity; see \cite[Theorem~9.19]{GilbargTrudinger2001}. This proves the lemma.
	\end{proof}
	
	\begin{proof}[Proof of Theorem~\ref{local:thm:main} and Corollary~\ref{local:cor:affine}]
		Choose $h_0$ by Lemma~\ref{local:lem:cgo} and apply Proposition~\ref{local:prop:pairwise} for each $T_*>0$. The profile is fixed using only the known reference. When $a_0=1$, take $h_0=x_1+c$. It satisfies \eqref{local:eq:stationary-reference} since $\Delta h_0=0$ and $|\nabla h_0|=1$, so the proposition also proves the corollary.
	\end{proof}
	
	Adding a constant to $h_0$ preserves \eqref{local:eq:stationary-reference}. After adding a sufficiently large constant and, if necessary, reducing the input radius, the center profile and every admissible input are positive.
	
	\section{Optimality of the final-data norm}\label{sec:optimality}
	
	Throughout this section, let $L=-\Delta$ with $\operatorname{Dom}(L)=H^2(\Om)\cap H_0^1(\Om)$, and write $\langle v,w\rangle=\int_\Om v\overline w\,dx$ for the $L^2$ inner product.
	
	Poincar\'e's inequality and the elliptic estimate in Lemma~\ref{lem:elliptic}, applied to $L=-\Delta$, give
	\[
	\begin{aligned}
		\norm{L^{-1}f}_{H^2}\le C\norm{f}_2,\quad f\in L^2(\Om),
	\end{aligned}
	\]
	where $C$ depends only on $\Om$.
	
	\subsection{The exponential experiment}\label{sec:exponential-sharpness}
	
	Fix $\lambda,T>0$ and set $\mathcal F_{\lambda,T}(a)=u_a(T)$, where $u_a$ solves the exponential experiment with initial datum $1$ and boundary input $\e^{\lambda t}$.
	
	\begin{theorem}\label{thm:exponential-sharpness}
		Let $n\ge2$ and fix $\lambda,T>0$. For every $0\le s<2$, there are no $\delta>0$ and $C<\infty$ such that
		\[
		\begin{aligned}
			\norm{a_1-a_2}_2\le C\norm{\mathcal F_{\lambda,T}(a_1)-\mathcal F_{\lambda,T}(a_2)}_{H^s}
		\end{aligned}
		\]
		for all real positive $a_j\in C^{1,\alpha}(\overline\Om)$ satisfying $a_j|_{\partial\Om}=1$ and $\norm{a_j-1}_{C^{1,\alpha}}\le\delta$.
	\end{theorem}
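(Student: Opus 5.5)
Take $a_2=1$ and $a_1=1+\varepsilon b$ with $b\in C_c^\infty(\Om)$ and $\varepsilon\to0$, and show that the linearized final-state map gains two derivatives in a way incompatible with an $L^2\to H^s$ lower bound for $s<2$. Write $P_a=P_a(T)$ as in \eqref{eq:mild}. The Fréchet derivative of $a\mapsto P_a(T)$ at $a=1$ in direction $b$ is
\[
\dot P=\int_0^T\e^{-(T-t)(L+\lambda)}\Div(b\nabla P_1(t))\,dt,
\]
with $P_1(t)=\lambda(L+\lambda)^{-1}(I-\e^{-t(L+\lambda)})1$. Since $u_a(T)=\e^{\lambda T}(1-P_a(T))$, the data difference is $-\e^{\lambda T}\varepsilon\dot P+O(\varepsilon^2)$. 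The remainder must be controlled in $H^s$ uniformly for $\norm{b}_{C^{1,\alpha}}\le1$. I would do this through the difference identity \eqref{eq:difference}, using Lemma~\ref{lem:multiplier} and Lemma~\ref{lem:elliptic}, which give a remainder of size $O(\varepsilon^2\norm{b}_{C^{1,\alpha}}\norm{b}_2)$ in $H^2$.

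The key step is to choose a sequence $b_k\in C_c^\infty(\Om)$ with $\norm{b_k}_2=1$ such that $\norm{\dot P[b_k]}_{H^s}\to0$. Take highly oscillating $b_k(x)=\chi(x)\cos(k x\cdot\omega)$ with a fixed cutoff $\chi$. Because $\Div(b\nabla P_1(t))$ places one derivative on $b$, and the parabolic smoothing over $(0,T)$ plus the elliptic part gains two, one expects $\norm{\dot P[b_k]}_{H^2}\lesssim\norm{b_k}_{H^1}/k\cdot(\dots)$. More precisely, $\dot P$ is bounded $L^2\to H^2$ via the reformulation \eqref{eq:difference}: $(A_1+S_1)\dot P=\Div(b\nabla P_1(T))+\text{(smoothing)}$. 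Hence $\dot P\approx(L+\lambda)^{-1}\Div(b\nabla P_1(T))$ modulo compact terms. Interpolation gives $\norm{\dot P[b_k]}_{H^s}\le\norm{\dot P[b_k]}_{H^2}^{s/2}\norm{\dot P[b_k]}_{L^2}^{1-s/2}$. The $H^2$ norm stays bounded, so it suffices to show $\norm{\dot P[b_k]}_{2}\to0$. This holds because $\dot P[b_k]=(L+\lambda+S_1)^{-1}[\Div(b_k\nabla P_1(T))+(\dot S)P_1(T)]$. Weakly $b_k\rightharpoonup0$, and the operator $b\mapsto(L+\lambda+S_1)^{-1}\Div(b\nabla P_1(T))$ is compact from $L^2$ to $L^2$, since it maps $L^2$ boundedly into $H^1_0$ by the duality bound \eqref{eq:weak-resolvent}. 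The term $\dot S P_1(T)$ is likewise compact, because its resolvent representation \eqref{eq:functional-calculus} involves the same $\Div(b\nabla\cdot)$ structure, composed with smoothing.

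Conclude by choosing $\varepsilon_k\to0$ such that $\varepsilon_k\norm{b_k}_{C^{1,\alpha}}\le\delta$ and the quadratic remainder, which is $O(\varepsilon_k^2\norm{b_k}_{C^{1,\alpha}})$, is $o(\varepsilon_k)$. Since $\norm{b_k}_{C^{1,\alpha}}\sim k^{1+\alpha}$, take $\varepsilon_k=k^{-2-\alpha}\delta$. Then $\norm{a_1-a_2}_2=\varepsilon_k$, while the $H^s$ data difference is $\varepsilon_k\bigl(\norm{\dot P[b_k]}_{H^s}+o(1)\bigr)\e^{\lambda T}=o(\varepsilon_k)$, which contradicts the estimate. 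The boundary condition $a_j|_{\partial\Om}=1$ holds because $b_k$ is compactly supported.

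The main obstacle is the remainder estimate. It must hold in $H^s$ with a bound quadratic in $\varepsilon$ but only linear in the large norm $\norm{b_k}_{C^{1,\alpha}}$, uniformly in $k$. I would derive it by applying \eqref{eq:difference} twice, with $d$ expanded to second order, and bounding $(S_1-S_2)$ using Lemma~\ref{lem:multiplier} with the $\norm{b}_2$ factor.
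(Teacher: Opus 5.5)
There is a genuine gap. Your argument fails at the claim that $\dot P$ is bounded from $L^2$ to $H^2$, equivalently that $\norm{\dot P[b_k]}_{H^2}$ stays bounded for $b_k=\chi\cos(kx\cdot\omega)$.

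For $b\in L^2$ the source $\Div(b\nabla P_1(T))$ lies only in $H^{-1}$, so $(A_1+S_1)^{-1}$ maps it into $H_0^1$, not $H^2$. Concretely, $A_1+S_1$ is bounded from $H^2\cap H_0^1$ to $L^2$, and your $\dot S$ term is $O(\norm{b_k}_2)$ in $L^2$. Hence $\norm{\dot P[b_k]}_{H^2}\ge c\norm{\Div(b_k\nabla P_1(T))}_2-C$, while
\[
\Div(b_k\nabla P_1(T))=-k\chi\sin(kx\cdot\omega)\,\omega\cdot\nabla P_1(T)+\cos(kx\cdot\omega)\Div(\chi\nabla P_1(T))
\]
has $L^2$ norm of order $k$ unless $\omega\cdot\nabla P_1(T)\equiv0$ on $\supp\chi$. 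A fixed linear phase cannot arrange this when the level sets of $P_1(T)$ are curved. Your heuristic bound $\norm{b_k}_{H^1}/k$ is therefore off by a factor $k$. Since $(A_1+S_1)^{-1}$ gains exactly two derivatives, $\norm{\dot P[b_k]}_{H^s}$ is of order $k^{s-1}$, and in particular it stays bounded below for $s\ge1$. So your test sequence contradicts nothing when $1\le s<2$. What your compactness argument does prove is the range $0\le s<1$: there $b\mapsto\dot P[b]$ is bounded from $L^2$ into $H_0^1$, which embeds compactly into $H^s$.

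The missing idea, which is the core of the paper's proof, is to oscillate \emph{along the level sets} of the reference final profile. Since $P_1(T)$ is nonzero with zero trace and $\Om$ is connected, $\nabla P_1(T)\neq0$ at some interior point. Flow-box coordinates for $\nabla P_1(T)$ then give a phase $\phi$ on some $V\Subset\Om$ with $\nabla\phi\cdot\nabla P_1(T)=0$ and $|\nabla\phi|>0$. For $b_k=\beta\cos(k\phi)$ with $\beta\in C_c^\infty(V)$, the top-order term cancels and $\Div(b_k\nabla P_1(T))=\cos(k\phi)\Div(\beta\nabla P_1(T))$ is bounded in $L^2$. This gives the uniform $H^2$ bound. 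Combined with $L^2$ decay and interpolation, it covers every $s<2$; the decay can come from your compactness argument, or from the explicit $O(k^{-2})$ bound the paper obtains after integrating Duhamel's formula by parts in time.

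Separately, the obstacle you single out is avoidable. Because the hypothetical estimate holds for every admissible pair, you may fix $k$, divide by $|\varepsilon|$, and let $\varepsilon\to0$ using only directional differentiability with a $b$-dependent constant. This gives $\norm{b_k}_2\le C\e^{\lambda T}\norm{\dot P[b_k]}_{H^s}$, and only afterwards do you let $k\to\infty$. No remainder estimate uniform in $k$ is required. The one you state, with a factor $\norm{b}_2$, is also not what the difference identity produces, since there $\norm{d}_{H^2}$ is controlled only through $\varepsilon\norm{\Div(b\nabla P_1(T))}_2$.
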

	
	\begin{proof}
		Let $U=u_1$ be the solution for the constant coefficient $1$. Extend $U$ to negative times by setting $U(x,t)=1$ for $t<0$. Since $U(t)\to1$ in $L^2(\Om)$ as $t\downarrow0$ and $\Delta1=0$, the extension satisfies $\partial_tU-\Delta U=0$ in distributions across $t=0$ in the interior of $\Om$. Interior regularity for the heat equation therefore gives bounded spatial derivatives of every fixed order of $U$ and $\partial_tU$ on compact subsets of $\Om$ for $0\le t\le T$. This also gives $\nabla U(0)=0$.
		
		We first prove convergence of the directional difference quotient in $H^2$. Fix real $b\in C_c^\infty(\Om)$, set $a_\varepsilon=1+\varepsilon b$, and define $Q_\varepsilon(t)=(u_{a_\varepsilon}(t)-U(t))/\varepsilon$ for small nonzero $\varepsilon$. Then $Q_\varepsilon$ has zero initial and boundary values and satisfies $\partial_tQ_\varepsilon+L_{a_\varepsilon}Q_\varepsilon=f_b(t)$, where $f_b(t)=\Div(b\nabla U(t))$. Let $Q_0$ solve this problem with $L$ in place of $L_{a_\varepsilon}$. The source is smooth, compactly supported in a fixed subset of $\Omega$, and uniformly bounded for $0\le t\le T$ in $\norm{f_b(t)}_2+\norm{L^jf_b(t)}_2$ for every fixed $j\ge1$. The mild formula gives
		\[
		\begin{aligned}
			L^jQ_0(t)=\int_0^t\e^{-(t-r)L}L^jf_b(r)\,dr.
		\end{aligned}
		\]
		Semigroup contraction and successive interior elliptic estimates give uniform bounds on every fixed spatial derivative of $Q_0$ on compact subsets; see \cite[Theorem~9.19]{GilbargTrudinger2001}.
		
		Put $R_\varepsilon=Q_\varepsilon-Q_0$ and $g_b(t)=\Div(b\nabla Q_0(t))$. Subtraction gives $\partial_tR_\varepsilon+L_{a_\varepsilon}R_\varepsilon=\varepsilon g_b(t)$, with zero initial and boundary values. The source $g_b(t)$ is supported in $\supp b$ and is uniformly bounded in $H^2(\Om)$ for $0\le t\le T$. In particular, its boundary trace is zero, so $g_b(t)\in H^2(\Om)\cap H_0^1(\Om)=\operatorname{Dom}(L_{a_\varepsilon})$. For fixed $b$ and small $\varepsilon$, Lemma~\ref{lem:elliptic} uniformly compares the $H^2$ norm with $\norm{v}_2+\norm{L_{a_\varepsilon}v}_2$. The semigroup contracts both terms of the latter norm, so
		\[
		\begin{aligned}
			R_\varepsilon(T)=\varepsilon\int_0^T\e^{-(T-t)L_{a_\varepsilon}}g_b(t)\,dt,\quad \norm{R_\varepsilon(T)}_{H^2}\le C_{b,T}|\varepsilon|.
		\end{aligned}
		\]
		We write $D\mathcal F_{\lambda,T}(1)b=Q_0(T)$ for this directional derivative. The convergence is in $H^2$ for each fixed $b$, and $C_{b,T}$ may depend on $b$.
		
		By \eqref{eq:mild}, the function $P(T)=1-\e^{-\lambda T}U(T)$ is nonzero and has zero boundary trace: the scalar multiplier $(1-\e^{-Ts})/s$ is strictly positive on the spectrum of $L+\lambda I$. Hence, $U(T)$ is not constant. Connectedness of $\Om$ implies that $\nabla U(T)$ is nonzero at some interior point. The flow-box theorem for the smooth vector field $\nabla U(T)$ gives local coordinates $x=\Phi(r,y)$ in a neighborhood $V\Subset\Om$, with $\partial_r\Phi(r,y)=\nabla U(T)(\Phi(r,y))$ and $y\in\mathbb R^{n-1}$. Since $n\ge2$, define $\phi(\Phi(r,y))=y_1$. Differentiation in $r$ and $y_1$ gives $\nabla\phi\cdot\nabla U(T)=0$ and $\nabla\phi(\Phi(r,y))\cdot\partial_{y_1}\Phi(r,y)=1$, so $|\nabla\phi|>0$ on $V$. Put $Y=\nabla\phi/|\nabla\phi|^2$, so $Y\cdot\nabla\phi=1$, and choose nonzero real $\beta\in C_c^\infty(V)$. Define $b_N=\beta\cos(N\phi)$ on $V$ and extend it by zero to $\Om$. Integration by parts along $Y$ gives $\norm{b_N}_2^2\to\tfrac12\norm{\beta}_2^2>0$ as $N\to \infty$.
		
		Set $q_N=D\mathcal F_{\lambda,T}(1)b_N$. Its source is $f_N(t)=\Div(b_N\nabla U(t))$. At $t=T$, the orthogonality condition on $\phi$ gives $f_N(T)=g\cos(N\phi)$, where $g=\Div(\beta\nabla U(T))\in C_c^\infty(V)$ is independent of $N$. Also $f_N(0)=0$. Integrating the Duhamel formula by parts in time yields
		\begin{equation}\label{eq:terminal-linear-ibp}
			\begin{aligned}
				q_N=L^{-1}f_N(T)-\int_0^T L^{-1}\e^{-(T-t)L}\Div(b_N\nabla\partial_tU(t))\,dt.
			\end{aligned}
		\end{equation}
		For each fixed $N$, the source is continuously differentiable in $L^2$ and compactly supported, which justifies this identity in $L^2$.
		
		Let $A\in H^2(V)$ be supported in $\supp\beta$. Since $Y\cdot\nabla\phi=1$, we have $Y\cdot\nabla\sin(N\phi)=N\cos(N\phi)$ and $Y\cdot\nabla\cos(N\phi)=-N\sin(N\phi)$. Two integrations by parts give
		\[
		\begin{aligned}
			\int_V A\cos(N\phi)\,dx=-\frac1N\int_V\Div(AY)\sin(N\phi)\,dx=-\frac1{N^2}\int_V\Div\bigl(Y\Div(AY)\bigr)\cos(N\phi)\,dx.
		\end{aligned}
		\]
		There are no boundary terms because $A$ is compactly supported in $V$. The expression $\Div(Y\Div(AY))$ contains derivatives of $A$ of order at most two, with coefficients bounded on $\supp\beta$. The Cauchy--Schwarz inequality gives
		\[
		\Big|\int_V A\cos(N\phi)\,dx\Big|\le CN^{-2}\norm{A}_{H^2(V)},
		\]
		where $C$ depends only on the fixed support and $\phi$. Smooth approximation justifies the identities for $A\in H^2(V)$. For $\psi\in L^2(\Om)$, set $v=L^{-1}\psi$. Applying this observation to $g\overline v$ and using the Dirichlet elliptic estimate gives
		\[
		\begin{aligned}
			|\langle L^{-1}f_N(T),\psi\rangle|=|\langle g\cos(N\phi),v\rangle|\le CN^{-2}\norm{v}_{H^2}\le CN^{-2}\norm{\psi}_2.
		\end{aligned}
		\]
		The amplitude is supported in $V$, so there are no boundary terms; smooth approximation justifies the integrations for $v\in H^2$. Also, $\norm{L^{-1}f_N(T)}_{H^2}\le C\norm{g}_2$ independently of $N$.
		
		Denote the integral in \eqref{eq:terminal-linear-ibp} by $r_N$. For $\tau>0$, let $v_\tau=L^{-1}\e^{-\tau L}\psi$. Since $V\Subset\Om$ and $Lv_\tau=\e^{-\tau L}\psi$, interior elliptic regularity gives
		\[
		\begin{aligned}
			\norm{v_\tau}_{H^3(V)}\le C(\norm{\e^{-\tau L}\psi}_{H^1(\Om)}+\norm{v_\tau}_2)\le C(1+\tau^{-1/2})\norm{\psi}_2;
		\end{aligned}
		\]
		see \cite[Theorem~9.19]{GilbargTrudinger2001}. The second inequality follows from the Dirichlet inverse estimate, $L^2$ contraction, and
		\[
		\begin{aligned}
			\norm{\nabla\e^{-\tau L}\psi}_2=\norm{L^{1/2}\e^{-\tau L}\psi}_2\le\bigl(\sup_{\xi>0}\sqrt{\xi}\e^{-\tau\xi}\bigr)\norm{\psi}_2\le C\tau^{-1/2}\norm{\psi}_2.
		\end{aligned}
		\]
		Pair $r_N$ with $\psi$ and integrate the divergence by parts. Applying the preceding oscillatory estimate with $A=\beta\nabla\partial_tU(t)\cdot\nabla\overline{v_{T-t}}$ gives
		\[
		\begin{aligned}
			|\langle r_N,\psi\rangle|\le CN^{-2}\int_0^T\norm{v_{T-t}}_{H^3(V)}\,dt\le C_TN^{-2}\norm{\psi}_2.
		\end{aligned}
		\]
		The spatial derivatives of $\partial_tU$ are uniformly bounded on $\supp\beta\Subset V$, and $(T-t)^{-1/2}$ is integrable.
		
		For the $H^2$ bound, let $G\in L^2(\Om;\C^n)$ and $\tau>0$. Define $\e^{-\tau L}\Div G$ by duality:
		\[
		\begin{aligned}
			\langle\e^{-\tau L}\Div G,\psi\rangle=-\int_\Om G\cdot\nabla\overline{\e^{-\tau L}\psi}\,dx,\quad \psi\in L^2(\Om).
		\end{aligned}
		\]
		The preceding gradient estimate gives
		\[
		\begin{aligned}
			|\langle\e^{-\tau L}\Div G,\psi\rangle|\le C\tau^{-1/2}\norm{G}_2\norm{\psi}_2.
		\end{aligned}
		\]
		Thus the pairing defines an $L^2$ function and agrees with the usual semigroup action when $\Div G\in L^2(\Om)$. The Dirichlet inverse estimate gives
		\[
		\begin{aligned}
			\norm{L^{-1}\e^{-\tau L}\Div G}_{H^2}\le C\norm{\e^{-\tau L}\Div G}_2\le C\tau^{-1/2}\norm{G}_2.
		\end{aligned}
		\]
		Apply this estimate with $G=b_N\nabla\partial_tU(t)$ and $\tau=T-t$. The $L^2$ norm of $G$ is bounded independently of $N$ and $t$, and $(T-t)^{-1/2}$ is integrable. Hence the integral defining $r_N$ converges in $H^2(\Om)\cap H_0^1(\Om)$ and satisfies $\norm{r_N}_{H^2}\le C_T$. Together with the preceding $L^2$ bounds, this gives
		\[
		\begin{aligned}
			\norm{q_N}_2\le C_TN^{-2},\quad \norm{q_N}_{H^2}\le C_T.
		\end{aligned}
		\]
		Sobolev interpolation yields $\norm{q_N}_{H^s}\le C_{T,s}N^{s-2}$ for $0\le s<2$.
		
		Suppose the asserted nonlinear Lipschitz estimate held. For each fixed $N$, take $a_1=1+\varepsilon b_N$ and $a_2=1$. They satisfy the coefficient assumptions for sufficiently small $|\varepsilon|$, with the allowable size depending on $N$. Divide by $|\varepsilon|$ and let $\varepsilon\to0$. The $H^2$ directional convergence gives $\norm{b_N}_2\le C\norm{q_N}_{H^s}$. Letting $N\to\infty$ contradicts $\norm{b_N}_2^2\to\tfrac12\norm{\beta}_2^2>0$. The directional limit is taken with $N$ fixed throughout.
	\end{proof}
	
	\subsection{The affine experiment}\label{sec:sharpness}
	
	\begin{theorem}\label{affine:thm:sharpness}
		Let $n\ge2$, fix $T>0$, and set $h_0(x)=x_1+c$ with fixed $c\in\mathbb R$. For every $0\le s<2$, there are no $\delta>0$ and $C<\infty$ such that
		\begin{equation}\label{affine:eq:excluded-estimate}
			\begin{aligned}
				\norm{a_1-a_2}_2\le C\norm{u_{a_1,h_0}(T)-u_{a_2,h_0}(T)}_{H^s}
			\end{aligned}
		\end{equation}
		for all real positive $a_j\in C^{1,\alpha}(\overline\Om)$ satisfying $a_j|_{\partial\Om}=1$ and $\norm{a_j-1}_{C^{1,\alpha}}\le\delta$.
	\end{theorem}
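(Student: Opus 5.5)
The plan is to mirror the proof of Theorem~\ref{thm:exponential-sharpness}, which is simpler here because the reference solution is stationary. For $a=1$ and $k=h_0=x_1+c$ we have $\Delta h_0=0$. Hence $u_{1,h_0}(t)=h_0$ for all $t$ and $z_{1,h_0}\equiv0$.

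\emph{Step 1: the directional derivative.} Fix real $b\in C_c^\infty(\Om)$ and put $a_\varepsilon=1+\varepsilon b$. By \eqref{local:eq:mild}, $Q_\varepsilon=(u_{a_\varepsilon,h_0}(T)-h_0)/\varepsilon=L_{a_\varepsilon}^{-1}(I-\e^{-TL_{a_\varepsilon}})\Div(b\nabla h_0)=L_{a_\varepsilon}^{-1}(I-\e^{-TL_{a_\varepsilon}})\partial_1b$. The source $\partial_1 b$ is independent of $\varepsilon$, smooth, and compactly supported. Let $Q_0=L^{-1}(I-\e^{-TL})\partial_1b$. Then
\[
\begin{aligned}
Q_\varepsilon-Q_0=(L_{a_\varepsilon}^{-1}(I-\e^{-TL_{a_\varepsilon}})-L^{-1}(I-\e^{-TL}))\partial_1 b.
\end{aligned}
\]
I would show this is $O(\varepsilon)$ in $H^2$ by the time-domain argument of Theorem~\ref{thm:exponential-sharpness}. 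Write $Q_\varepsilon(t)$ for the parabolic solution with source $\partial_1b$. The remainder $R_\varepsilon$ then solves $\partial_tR_\varepsilon+L_{a_\varepsilon}R_\varepsilon=\varepsilon\Div(b\nabla Q_0(t))$, whose source lies in $\operatorname{Dom}(L_{a_\varepsilon})$ uniformly in $t$. The same Lemma~\ref{lem:elliptic} comparison gives $\norm{R_\varepsilon(T)}_{H^2}\le C_{b,T}|\varepsilon|$. This identifies the derivative $q_b=L^{-1}(I-\e^{-TL})\partial_1b$.

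\emph{Step 2: oscillating directions.} Take $b_N=\beta\cos(Nx_2)$ with nonzero real $\beta\in C_c^\infty(\Om)$; this uses $n\ge2$. The direction $x_2$ is orthogonal to $\nabla h_0=e_1$, which is the analogue of the flow-box function $\phi$. Then $\partial_1 b_N=(\partial_1\beta)\cos(Nx_2)$ has an $N$-independent amplitude, and $\norm{b_N}_2^2\to\frac12\norm{\beta}_2^2$. Split $q_N=L^{-1}\partial_1b_N-L^{-1}\e^{-TL}\partial_1b_N$.

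For the first term, the $H^2$ bound is $C\norm{\partial_1\beta}_2$. The $L^2$ bound follows by duality: pair with $\psi$, set $v=L^{-1}\psi$, and integrate by parts twice in $x_2$ against $(\partial_1\beta)\overline v$. This gives $|\langle L^{-1}\partial_1b_N,\psi\rangle|\le CN^{-2}\norm{v}_{H^2}\le CN^{-2}\norm\psi_2$.

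For the second term, the $H^2$ bound is again $C\norm{\partial_1\beta}_2$, since $\e^{-TL}$ contracts on $L^2$. For its $L^2$ norm, pair with $\psi$ and use self-adjointness to get $\langle (\partial_1\beta)\cos(Nx_2),\e^{-TL}L^{-1}\psi\rangle$. The function $w=\e^{-TL}L^{-1}\psi$ is smooth in the interior with $\norm{w}_{H^2(\supp\beta)}\le C_T\norm\psi_2$, since $T$ is fixed and positive. The same two integrations by parts then give $CN^{-2}\norm\psi_2$. In fact this term is even smoother, but $N^{-2}$ suffices.

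Hence $\norm{q_N}_2\le C_TN^{-2}$ and $\norm{q_N}_{H^2}\le C_T$. Interpolation gives $\norm{q_N}_{H^s}\le C N^{s-2}$ for $0\le s<2$.

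\emph{Step 3: contradiction.} Suppose \eqref{affine:eq:excluded-estimate} held. Apply it with $a_1=1+\varepsilon b_N$ and $a_2=1$; for fixed $N$ and small $\varepsilon$ these coefficients equal $1$ on $\partial\Om$ and lie in the $\delta$-ball. Divide by $|\varepsilon|$ and let $\varepsilon\to0$ using the $H^2$ convergence from Step 1. This yields $\norm{b_N}_2\le C\norm{q_N}_{H^s}\to0$, contradicting $\norm{b_N}_2\to\norm\beta_2/\sqrt2$.

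\emph{Main obstacle.} I expect the main obstacle to be Step 1, namely the rigorous $H^2$ differentiability of the nonlinear map at $a=1$ for fixed $b$. The key point is that the source $\Div(b\nabla Q_0)$ must be in the operator domain uniformly in time, so that the $\norm{\cdot}_2+\norm{L_{a_\varepsilon}\cdot}_2$ norm is preserved by the semigroup. This needs interior smoothness of $Q_0(t)$ on $\supp b$ uniformly for $t\in[0,T]$. I would obtain it, as before, from the mild formula applied to $L^jQ_0$ together with interior elliptic regularity \cite[Theorem~9.19]{GilbargTrudinger2001}.
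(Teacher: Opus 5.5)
Your proposal is correct and matches the paper's proof essentially step for step. It uses the same $H^2$ directional derivative via the time-domain remainder argument with $U=h_0$, oscillating directions in $x_2$ ($\cos$ where the paper uses $\sin$), and the $N^{-2}$ duality bound from two integrations by parts in $x_2$. It then concludes with interpolation and the contradiction with $\norm{b_N}_2\to\norm{\beta}_2/\sqrt2$.

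The only difference is cosmetic. The paper handles $L^{-1}\e^{-TL}\partial_1b_N$ by writing $q_N=(I-\e^{-TL})L^{-1}\partial_1b_N$ and using $L^2$ contraction. So your separate interior-regularity argument for $\e^{-TL}L^{-1}\psi$ is unnecessary: one has $\norm{L^{-1}\e^{-TL}\psi}_{H^2}\le C\norm{\psi}_2$ globally, with no dependence on $T$.
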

	
	For this fixed profile, define $\mathcal F_T(a)=u_{a,h_0}(T)$. The reference solution at $a=1$ is stationary: $u_{1,h_0}(t)=h_0$. Hence the source of the linearized equation in a direction $b\in C_c^\infty(\Om;\mathbb R)$ is $\Div(b\nabla h_0)=\partial_1b$, and
	\begin{equation}\label{affine:eq:linearization}
		\begin{aligned}
			D\mathcal F_T(1)b=L^{-1}(I-\e^{-TL})\partial_1b.
		\end{aligned}
	\end{equation}
	The $H^2$ directional convergence follows from the argument in the proof of Theorem~\ref{thm:exponential-sharpness}, with $U=h_0$. In this case, the source $\partial_1b$ is time-independent, smooth, and compactly supported, so it satisfies all the regularity conditions used in that argument.
	
	\begin{proof}[Proof of Theorem~\ref{affine:thm:sharpness}]
		Choose a nonzero real function $\beta\in C_c^\infty(\Om)$ and set $b_N=\beta\sin(Nx_2)$ for positive integers $N$. Since $\sin^2(Nx_2)=(1-\cos(2Nx_2))/2$,
		\[
		\begin{aligned}
			\norm{b_N}_2^2=\frac12\norm{\beta}_2^2-\frac12\int_\Om\beta^2\cos(2Nx_2)\,dx.
		\end{aligned}
		\]
		Integration by parts in $x_2$, with no boundary term because $\beta$ is compactly supported, gives
		\[
		\begin{aligned}
			\Big|\int_\Om\beta^2\cos(2Nx_2)\,dx\Big|=\Big|-\frac1{2N}\int_\Om\partial_2(\beta^2)\sin(2Nx_2)\,dx\Big|\le\frac1{2N}\norm{\partial_2(\beta^2)}_{L^1}\longrightarrow0.
		\end{aligned}
		\]
		Thus, $\norm{b_N}_2^2\to\tfrac12\norm{\beta}_2^2>0$. In particular, there is an integer $N_0$ such that $\norm{b_N}_2\ge c_\beta:=\frac12\norm{\beta}_2>0$ for all $N\ge N_0$.
		
		Put $f_N=\partial_1b_N=(\partial_1\beta)\sin(Nx_2)$ and $w_N=L^{-1}f_N$. For $\varphi\in L^2(\Om)$, let $v=L^{-1}\varphi$. Self-adjointness of $L^{-1}$ and two integrations by parts in $x_2$ give
		\[
		\begin{aligned}
			\langle w_N,\varphi\rangle=\langle f_N,v\rangle=\int_\Om(\partial_1\beta)\overline v\sin(Nx_2)\,dx=-\frac1{N^2}\int_\Om\partial_2^2((\partial_1\beta)\overline v)\sin(Nx_2)\,dx.
		\end{aligned}
		\]
		The product $(\partial_1\beta)\overline v$ belongs to $H^2(\Om)$ and is compactly supported in $\Om$, so these integrations by parts produce no boundary terms. The product rule and the elliptic estimate for $L^{-1}$ imply
		\[
		\begin{aligned}
			|\langle w_N,\varphi\rangle|\le CN^{-2}(\norm{v}_2+\norm{\partial_2v}_2+\norm{\partial_2^2v}_2)\le CN^{-2}\norm{v}_{H^2}
			\le CN^{-2}\norm{\varphi}_2.
		\end{aligned}
		\]
		Taking the supremum over $\norm{\varphi}_2=1$ yields $\norm{w_N}_2\le CN^{-2}$.
		
		By \eqref{affine:eq:linearization}, write
		\[
		\begin{aligned}
			q_N:=D\mathcal F_T(1)b_N=(I-\e^{-TL})w_N.
		\end{aligned}
		\]
		The semigroup is an $L^2$ contraction, preserves $\operatorname{Dom}(L)$, and commutes with $L$ on this domain. Consequently,
		\[
		\begin{aligned}
			\norm{q_N}_2\le2\norm{w_N}_2\le CN^{-2},\quad \norm{Lq_N}_2=\norm{(I-\e^{-TL})f_N}_2\le2\norm{f_N}_2\le2\norm{\partial_1\beta}_2.
		\end{aligned}
		\]
		The elliptic estimate therefore gives $\norm{q_N}_{H^2}\le C(\norm{Lq_N}_2+\norm{q_N}_2)\le C$. Interpolation between $L^2$ and $H^2$ now yields
		\begin{equation}\label{affine:eq:oscillatory-bound}
			\begin{aligned}
				\norm{D\mathcal F_T(1)b_N}_{H^s}=\norm{q_N}_{H^s}\le C_s\norm{q_N}_2^{1-s/2}\norm{q_N}_{H^2}^{s/2}\le C_{T,s}N^{s-2},\quad 0\le s<2,
			\end{aligned}
		\end{equation}
		where $C_{T,s}$ is independent of $N$.
		
		Suppose that \eqref{affine:eq:excluded-estimate} holds with a constant $C_{\mathrm{Lip}}$ independent of the coefficient pair. Fix $N$ and take $a_1=1+\varepsilon b_N$ and $a_2=1$. Both coefficients equal $1$ near $\partial\Om$, and
		\[
		\begin{aligned}
			\norm{a_1-1}_{C^{1,\alpha}}=|\varepsilon|\norm{b_N}_{C^{1,\alpha}}\longrightarrow0
		\end{aligned}
		\]
		as $\varepsilon\to0$. Hence both coefficients are admissible for sufficiently small nonzero real $\varepsilon$. Applying the assumed estimate and dividing by $|\varepsilon|$ gives
		\[
		\begin{aligned}
			\norm{b_N}_2\le C_{\mathrm{Lip}}\Big\|\frac{\mathcal F_T(1+\varepsilon b_N)-\mathcal F_T(1)}{\varepsilon}\Big\|_{H^s}.
		\end{aligned}
		\]
		Keeping $N$ fixed, the directional differentiability established above gives
		\[
		\begin{aligned}
			\frac{\mathcal F_T(1+\varepsilon b_N)-\mathcal F_T(1)}{\varepsilon}
			\longrightarrow D\mathcal F_T(1)b_N
			\quad\text{in }H^2(\Om)
		\end{aligned}
		\]
		as $\varepsilon\to0$, and hence also in $H^s(\Om)$. Passing to this limit, using \eqref{affine:eq:oscillatory-bound} and the lower bound for $\norm{b_N}_2$, we obtain
		\[
		\begin{aligned}
			0<c_\beta
			\le\norm{b_N}_2
			\le C_{\mathrm{Lip}}\norm{D\mathcal F_T(1)b_N}_{H^s}
			\le C_{\mathrm{Lip}}C_{T,s}N^{s-2},\quad N\ge N_0.
		\end{aligned}
		\]
		We now let $N\to\infty$. Since $s<2$, the right-hand side tends to zero, contradicting $c_\beta>0$.
	\end{proof}

	\section*{Statements and declarations}
	
	\para{Data availability statement} No datasets were generated or analyzed in this study.
	
	\para{Conflict of interest} The authors declare no conflict of interest.
	
	\para{Acknowledgments} Y.-H. Lin is partially supported by the National Science and Technology Council (NSTC), Taiwan, under the project 113-2115-M-A49-017-MY3. He also acknowledges financial support from the Alexander von Humboldt Foundation through the Henriette Herz Scouting Programme, hosted by Universit\"at Duisburg-Essen, Germany. The main results, arguments, and statements of this paper were developed without the use of AI. AI tools were used for auxiliary mathematical discussions and for assistance with parts of the writing. The authors take full responsibility for the final mathematical statements and proofs.

	\bibliographystyle{alpha}
	\bibliography{refs}
	
\end{document}